\newcolumntype{M}[1]{>{\centering\arraybackslash}m{#1}}
\theoremstyle{plain}
\newtheorem{theorem}{Theorem}[section]
\newtheorem{lemma}[theorem]{Lemma}
\newtheorem{corollary}[theorem]{Corollary}
\newtheorem{assumption}[theorem]{Assumption}
\theoremstyle{definition}
\newtheorem{remark}[theorem]{Remark}
\numberwithin{equation}{section}
\newcommand{\linspan}{\mathop{\rm span}\nolimits}
\newcommand{\rest}{\left.\kern-2\nulldelimiterspace\right|_}
\newcommand{\norm}[2]{\left|#1\right|_{#2}}
\newcommand{\dnorm}[2]{\left\|#1\right\|_{#2}}
\newcommand{\zero}{{\mathbf0}}
\newcommand{\Id}{{\mathbf1}}
\newcommand{\indf}{1}
\newcommand{\p}{\partial}
\newcommand*{\Bigcdot}{\raisebox{-.25ex}{\scalebox{1.25}{$\cdot$}}}
\newcommand{\clC}{{\mathcal C}}
\newcommand{\clE}{{\mathcal E}}
\newcommand{\clF}{{\mathcal F}}
\newcommand{\clG}{{\mathcal G}}
\newcommand{\clH}{{\mathcal H}}
\newcommand{\clL}{{\mathcal L}}
\newcommand{\clN}{{\mathcal N}}
\newcommand{\clQ}{{\mathcal Q}}
\newcommand{\clU}{{\mathcal U}}
\newcommand{\clV}{{\mathcal V}}
\newcommand{\clZ}{{\mathcal Z}}
\newcommand{\bbN}{{\mathbb N}}
\newcommand{\bbR}{{\mathbb R}}
\newcommand{\bbT}{{\mathbb T}}
\newcommand{\bfA}{{\mathbf A}}
\newcommand{\bfF}{{\mathbf F}}
\newcommand{\bfK}{{\mathbf K}}
\newcommand{\bfN}{{\mathbf N}}
\newcommand{\bfP}{{\mathbf P}}
\newcommand{\bfQ}{{\mathbf Q}}
\newcommand{\bfR}{{\mathbf R}}
\newcommand{\bfS}{{\mathbf S}}
\newcommand{\bfW}{{\mathbf W}}
\newcommand{\fkH}{{\mathfrak H}}
\newcommand{\fkR}{{\mathfrak R}}
\newcommand{\fkT}{{\mathfrak T}}
\newcommand{\fkW}{{\mathfrak W}}
\newcommand{\rmD}{{\mathrm D}}
\newcommand{\bfn}{{\mathbf n}}
\newcommand{\bfu}{{\mathbf u}}
\newcommand{\rmc}{{\mathrm c}}
\newcommand{\rmd}{{\mathrm d}}
\newcommand{\rme}{{\mathrm e}}
\newcommand{\fkp}{{\mathfrak p}}
\newcommand{\ttc}{{\tt c}}
\newcommand{\ttr}{{\tt r}}
\newcommand{\ovlineC}[1]{\overline C_{\left[#1\right]}}
\definecolor{DarkBlue}{rgb}{0,0.08,0.45}
\definecolor{DarkRed}{rgb}{.65,0,0}
\definecolor{applegreen}{rgb}{0.55, 0.71, 0.0}
\newcounter{mymac@matlab}
\newcommand{\matlab}{MATLAB%
   \ifnum\value{mymac@matlab}<1%
   \textregistered%
   \setcounter{mymac@matlab}{1}%
   \fi%
  }
\begin{document}
\title{Stabilization to trajectories of nonisothermal Cahn--Hilliard equations}
\author{Behzad Azmi$^{\tt1}$}
\author{Marvin Fritz$^{\tt2}$}
\author{S\'ergio S.~Rodrigues$^{\tt2}$}
\thanks{
\vspace{-1em}\newline\noindent
{\sc MSC2020}: 93D15; 93B52; 93C20; 35K52; 35Q93; 65M22
\newline\noindent
{\sc Keywords}: exponential stabilization to trajectories, Cahn--Hilliard systems, oblique projection feedback, finite-dimensional control,
\newline\noindent
$^{\tt1}$ Dep.  Math. Stat., Univ.
Konstanz, Universit\"atstr. 10, D-78457 Konstanz, Germany.
  \newline\noindent
$^{\tt2}$ Johann Radon Inst. Comput. Appl. Math.,
  \"OAW, Altenbergerstr. 69, 4040 Linz, Austria.
\quad
\newline\noindent
{\sc Emails}:
{\small\tt behzad.azmi@uni-konstanz.de,\quad marvin.fritz@ricam.oeaw.ac.at,
\newline\hspace*{3.5em}sergio.rodrigues@ricam.oeaw.ac.at}%
}

\begin{abstract}
In this work, it is proven the semiglobal exponential stabilization to time-dependent trajectories of the nonisothermal Cahn--Hilliard equations. In the model, the input controls are given by explicit feedback operators that involve appropriate oblique projections. The actuators are given by a finite number of indicator functions. The results also hold for the isothermal Cahn--Hilliard system. Numerical simulations are shown that illustrate the stabilizing performance of the proposed input feedback operators.
\end{abstract}

\maketitle

\pagestyle{myheadings} \thispagestyle{plain} \markboth{\sc  B. Azmi, M. Fritz,  S.S.
Rodrigues}{\sc Stabilization of nonisothermal Cahn--Hilliard systems}

%%%%%%%%%%%%%%%%%%%%%%%%%%
%%%%%%%%%%%%%%%%%%%%%%%%%%
%%%%%%%%%%%%%%%%%%%%%%%%%%
\section{Introduction}
Let us be given a trajectory~$y_\ttr$ of the nonisothermal Cahn--Hilliard system
\begin{subequations}\label{sys-CH}
  \begin{align}  
 &\tfrac{\p}{\p t}y_{1\ttr} +  \nu_2\Delta^2 y_{1\ttr}  - \Delta\Bigl(f(y_{1\ttr})-\nu_1 y_{2\ttr}\Bigr)+g(y_{1\ttr})=h_1,\\
  &\tfrac{\p}{\p t}(y_{2\ttr} +\nu_0 y_{1\ttr} )- \Delta y_{2\ttr}=h_2,\\
 & \clG (y_{\ttr})\rest{\p\Omega}= 0,\qquad y_\ttr(0,\Bigcdot)= y_{\ttr0}
\end{align}
\end{subequations}
evolving in a spatial domain~$\Omega\in\bbR^d$,~$d\in\{1,2,3\}$,  a bounded convex polygonal domain. Denoting by~$(t,x)$ a generic point in the cylinder~$(0,+\infty)\times\Omega$, the state is the vector function
$y_{\ttr}=y_{\ttr}(t,x)=(y_{1\ttr}(t,x),y_{2\ttr}(t,x))\in\bbR^2$; the second component~$y_{2\ttr}$ represents the temperature and the first component~$y_{1\ttr}$ is the so-called order parameter indicating the phase transitions. The vector function~$h=h(t,x)=(h_1(t,x),h_2(t,x))\in\bbR^2$, represents the given external force, and~$y_{\ttr0}=y_{\ttr0}(x)\in\bbR^2$ is a given initial state, at time~$t=0$.

In case the external force~$h$ is independent of time, the target~$y_{\ttr}$ can be an equilibrium of the system. In this work, we consider the more general case where the target may be time-dependent. In particular, we include the case of time-dependent external forces. 

The nonlinear functions~$f\colon\bbR\to\bbR$ and~$g\colon\bbR\to\bbR$ are also given, as well as the constant vector~$\nu=(\nu_0,\nu_1,\nu_2)\in\bbR^3$ with strictly positive ~$\nu_2>0$. 
We shall consider a class of nonlinearities~$f$ including the classical double-well potential~(cf.~\cite{BarbuColliGilMar17,Caginalp88}),
\begin{align}\label{polyFtau}
F(\zeta)\coloneqq \tau(\zeta^2-1)^2,\quad\mbox{that is,}\quad f(\zeta)=F'(\zeta)=4\tau(\zeta^3-\zeta),
\end{align}
where the constant~$\tau>0$ is related with the thickness of the phase transition.
The operator~$\Delta$ stands for the usual Laplacian, formally, if~$(x_1,...,x_d)\coloneqq x$ denote the coordinates of the generic spatial point~$x\in\Omega$, 
\begin{equation}\notag
\Delta \phi \coloneqq (\tfrac{\p}{\p x_1})^2\phi+...+ (\tfrac{\p}{\p x_d})^2\phi,
\end{equation}
for a scalar function~$\phi=\phi(x)=\phi(x_1,...,x_d)$. Finally, the operator~$\clG$  represents the boundary conditions. The results cover, for a generic spatial bounded domain~$\Omega\subset\bbR^d$, both Dirichlet and Neumann boundary conditions (bcs),
\begin{subequations}\label{bcs}
\begin{align}
&\clG y_\ttr\coloneqq(y_{1\ttr}, \Delta y_{1\ttr},y_{\ttr2}),&\quad&\mbox{for Dirichlet bcs};\\
&\clG y_\ttr\coloneqq(\bfn\cdot\nabla y_{1\ttr}, \bfn\cdot \nabla \Delta  y_{1\ttr}, \bfn\cdot\nabla y_{\ttr2}),&\quad&\mbox{for Neumann bcs}.\\
\intertext{The results also cover periodic ``boundary conditions'', say}
&\clG y_\ttr \coloneqq0,&\quad&\mbox{for periodic ``bcs''},
\end{align}
where the spatial domain is the boundaryless torus~$\Omega=\bbT^d_L\sim\bigtimes\limits_{n=1}^d[0,L_n)$,~$\p\bbT^d_L=\emptyset$.
\end{subequations}

\begin{remark}
It is important, that the constant~$\nu_2>0$ is strictly positive, namely, to guarantee the existence of and uniqueness of solutions. Usually, we will also have~$\nu_0>0$ and~$\nu_1>0$; see~\cite{BarbuColliGilMar17}. Here, we consider the more general cases~$\nu_0\in\bbR$ and~$\nu_1\in\bbR$, because it allows to include the decoupled case as well, namely, the ``isothermal'' model where~$\nu_1=0$, and also because the result follows by the same arguments.
\end{remark}

%%%%%%%%%%%%%%%%%%%%%%%%%%%%%%
%%%%%%%%%%%%%%%%%%%%%%%%%%%%%%
\subsection{Stabilization to trajectories}
We assume that we are given a  reference trajectory~$y_\ttr$ of~\eqref{sys-CH} with a behavior that we would like to track. If we are given a different initial state~$y_0\ne{y_{\ttr0}}$, we may need a control to track~$y_\ttr$. 

In applications, we will likely have at our disposal a finite number of actuators only. Let~$\Phi_j=\Phi_j(x)$,~$1\le j\le M_{\sigma}$ denote the actuators acting on the order parameter equation, and let~$\Psi_j=\Psi_j(x)$,~$1\le j\le M_\varsigma$ denote the actuators acting on the heat equation.

Then, our goal is to find control inputs~$u=u(t)\in\bbR^{M_\sigma}$ and~$v=v(t)\in\bbR^{M_\varsigma}$ such that the solution~$y_\ttc$ of the controlled Cahn--Hilliard system
\begin{subequations}\label{sys-CH-control}
  \begin{align}  
 &\tfrac{\p}{\p t}y_{1\ttc} +  \nu_2\Delta^2 y_{1\ttc}  - \Delta\Bigl(f(y_{1\ttc})-\nu_1 y_{2\ttc}\Bigr)+g(y_{1\ttc})=h_1+{\textstyle\sum_{j=1}^{M_{\sigma}}}\;u_j\Phi_j,\\
  &\tfrac{\p}{\p t}(y_{2\ttc} +\nu_0 y_{1\ttc} )- \Delta y_{2\ttc}=h_2+{\textstyle\sum_{j=1}^{M_\varsigma}}\;v_j\Psi_j,\\
 & \clG (y_{\ttc})\rest{\p\Omega}= 0,\qquad y_\ttc(0,\Bigcdot)= y_{\ttc0},
\end{align}
\end{subequations}
converges to the targeted solution~$y_\ttr$ as time increases.
The scalars~$u_j=u_j(t)$ and~$v_j=v_j(t)$ stand for the coordinates of the inputs at time~$t$,~$u\eqqcolon(u_1,\dots,u_{M_\sigma})$ and~$v\eqqcolon(v_1,\dots,v_{M_\varsigma})$.

We shall show that for arbitrary given~$\mu>0$ and~$R>0$, we can find a suitable set of actuators and explicit feedback operators~$\bfK_1$ and~$\bfK_2$ such that with the inputs
  \begin{equation}\notag
u(t)=\bfK_1(y_\ttc(t,\Bigcdot)-y_\ttr(t,\Bigcdot))\in\bbR^{M_\sigma},\qquad v(t)=\bfK_2(y_\ttc(t,\Bigcdot)-y_\ttr(t,\Bigcdot))\in\bbR^{M_\varsigma},
\end{equation}
depending only on the difference between the controlled  state~$y_\ttc(t,\Bigcdot)$ and the targeted  state~$y_\ttr(t,\Bigcdot)$, at time~$t$, we will have that~$y_\ttc(t,\Bigcdot)$ converges to~$y_\ttr(t,\Bigcdot)$ exponentially, provided the initial difference has a norm smaller than~$R$, more precisely:
\begin{equation}\notag
\begin{split}
&\mbox{For every } y_{\ttc0} \mbox{ satisfying }\norm{y_{\ttc0}-y_{\ttr0}}{L^2(\Omega)}\le R\mbox{ we will have that }\\
&\quad\norm{y_\ttc(t,\Bigcdot)-y_\ttr(t,\Bigcdot)}{L^2(\Omega)\times L^2(\Omega)}\le \rme^{-\mu (t-s)}\norm{y_\ttc(s,\Bigcdot)-y_\ttr(s,\Bigcdot)}{L^2(\Omega)\times L^2(\Omega)},\\
&\quad\mbox{ for all } t\ge s\ge 0.
\end{split}
\end{equation}

The actuators and feedback are explicitly given in Sections~\ref{sS:act} and~\ref{sS:K}. Note that the rate~$\mu$ and the constant~$R>0$ are given apriori. In particular, we will be able to stabilize the system for any given initial state~$y_{\ttc0}$, provided we choose an appropriate (large enough) number of actuators depending on (an upper bound for)~$\norm{y_{\ttc0}-y_{\ttr0}}{L^2(\Omega)}$. In this sense we will have a semiglobal stabilizability result.

We shall derive the stabilizability results under general boundedness requirements on the  the external force~$h$ and on the nonlinearities~$f$ and~$g$, which are precised in Assumptions~\ref{A:h}, \ref{A:fgV}, and~\ref{A:N-dif}. We shall need a boundedness requirement for the targeted trajectory~$y_{\ttr}$ as well. This is done implicitly in Assumption~\ref{A:NfNg1} and explicitly in Assumption~\ref{A:yr-bdd} for the concrete case of the double-well potential~\eqref{polyFtau}.

To demonstrate the derived stabilizability results, we employ numerical simulations. We compare the evolution of phase-fields across varying numbers of actuators and adjust the size of the parameter associated with the feedback operator. Through these simulations, we effectively showcase our analytical findings. Specifically, we illustrate that the phase-field does not converge to its typical equilibrium state, but instead converges to its designated target. This observation underscores the predominant role of the feedback process in shaping the system dynamics.

%%%%%%%%%%%%%%%%%%%%%%%%%%%%
%%%%%%%%%%%%%%%%%%%%%%%%%%%%
\subsection{Previous literature}\label{sS:litter}
Although finite-horizon optimal control problems for various Cahn--Hilliard type systems have been studied by many authors, such as \cite{HintKei24,ColGilSigSpr23,CheBoshLiu21,GarLamKeiSig21,GarHinKah19,ColGilSpr18,HinHinKahKei18,FigRocSpr16,HintWeg12}   ,  the stabilization of these systems using feedback control has received relatively little attention. Despite the analytical interest and wide applications of the Cahn--Hilliard systems, there is very little research in this direction. One notable work is \cite{BarbuColliGilMar17}, which investigates the local stabilization of the (nonisothermal) Cahn–Hilliard equations around an equilibrium, specifically a time-independent trajectory  ~$y_\ttr(t)=y_{\ttr0}$.  In that study,  the feedback control is Riccati-based, computed by linearizing the system around the steady state and constructing the stabilizing control from a finite combination of the unstable modes of the operator in the linear system. These results were extended in \cite{marinoschi2022} for the double-well potential and in \cite{Marinoschi18} to include the viscosity effect in the phase field equation.  We can also mention \cite{Guzman20}, which investigates the local exact controllability to the trajectories of the one-dimensional Cahn--Hilliard equation.  \\
The feedback control proposed in this manuscript differs from \cite{BarbuColliGilMar17} in several  aspects. Instead of using Riccati-based feedback as in \cite{BarbuColliGilMar17}, we propose an explicit feedback based on oblique projections. This approach allows us to prove semi-global stabilizability and to stabilize around a given time-dependent trajectory. However, unlike in \cite{BarbuColliGilMar17}, where the actuator supports can be localized in an arbitrary open subset of the space domain, our feedback control requires that the actuator supports are distributed across the entire domain (still, the  total volume covered by the actuators can be taken arbitrarily small and fixed apriori).

The seminal work by Cahn and Hilliard (cf.~\cite{CahnHilliard58}) introduces a model that captures the essential features of phase separation processes through diffuse interface models, where the interface between coexisting phases is represented by a thin interfacial region, allowing partial mixing of components. Initially formulated under isothermal conditions, their model assumes isotropy and constant temperature, focusing solely on diffusive phenomena arising from instantaneous quenching below a critical temperature. 
Real-world phase separation processes often deviate from the instantaneous quench assumption, prompting investigations into nonisothermal Cahn--Hilliard models as proposed in \cite{alt1992mathematical}. These models, considering quenches over finite periods and incorporating external thermal activation for process control, offer a more realistic portrayal of phase separation dynamics. Early studies delved into the existence and uniqueness of solutions (cf.~\cite{shen1993coupled}), while analyses such as~$L^p$ maximal regularity, as explored in \cite{pruss2006maximal}, provided insight into the mathematical properties of nonisothermal Cahn--Hilliard equations.
The applicability of Cahn-Hilliard models transcends materials science, extending into diverse fields like cell biology and image processing. The book \cite{miranville2019cahn} offers comprehensive insights into Cahn--Hilliard dynamics and their applications, reflecting the broad interdisciplinary interest in this area. In particular, the reinterpretation of temperature as nutrients and the consequent analogy with tumor growth models with chemotaxis, as discussed in \cite{Fritz2023tumor}, underscores the versatility and relevance of Cahn--Hilliard frameworks in the treatment of complex biological phenomena.

%%%%%%%%%%%%%%%%%%%%%%%%%%
%%%%%%%%%%%%%%%%%%%%%%%%%%
\subsection{Contents}\label{sS:contents}
The rest of the paper is organized as follows. 
In Section~\ref{S:setting}, we gather functional spaces which are appropriate to investigate the evolution of the state of the Cahn--Hilliard system and construct the actuators and input feedback operators. The proof of the stabilization of the trajectories is given in Section~\ref{S:stab} under the appropriate assumptions. The problems concerning the existence, uniqueness and regularity of solutions are addressed in Section~\ref{S:exiuni},  under additional assumptions. The satisfiability of the required assumptions is shown in Section~\ref{S:assumok}. The results of the simulations are presented in Section~\ref{S:simul} that validates the theoretical results.
Finally, the proofs of some auxiliary results are gathered in the appendix. 

\subsection{Notation}\label{sS:notation}
Concerning notation, we write~$\bbR$ and~$\bbN$ for the sets of real numbers and nonnegative
integers, respectively, and we define~$\bbR_+\coloneqq(0\,+\infty)$,  and~$\mathbb
N_+\coloneqq\mathbb N\setminus\{0\}$.

Given Hilbert spaces~$X$ and~$Y$, if the inclusion
$X\subseteq Y$ is continuous, we write~$X\xhookrightarrow{} Y$. We write
$X\xhookrightarrow{\rm d} Y$, respectively~$X\xhookrightarrow{\rm c} Y$, if the inclusion is also dense, respectively, compact. 
The space of continuous linear mappings from~$X$ into~$Y$ is denoted by~$\clL(X,Y)$. In case~$X=Y$ we 
write~$\clL(X)\coloneqq\clL(X,X)$.
The continuous dual of~$X$ is denoted~$X'\coloneqq\clL(X,\bbR)$. The scalar product on a Hilbert space~$\clH$  is denoted~$(\Bigcdot,\Bigcdot)_\clH$. Given closed subspaces~$\clF$ and~$\clG$ of~$\clH$, in case 
$\clF\cap \clG=\{0\}$ we say that~$\clF+\clG$ is a direct sum and we write~$\clF\oplus \clG$ instead.
For a subset~$S\subseteq\clH$, its orthogonal complement is
denoted~$S^{\perp \clH}\coloneqq\{h\in \clH\mid (h,s)_\clH=0\mbox{ for all }s\in S\}$.

The space of continuous functions from a subset~$S\subseteq X$ into~$Y$ is denoted by~$\clC(S,Y)$. 

By~$\overline C_{\left[b_1,\dots,b_n\right]}$ we denote a nonnegative function that
increases in each of its nonnegative arguments~$b_j\ge0$,~$1\le j\le n$.
Finally,~$C,\,C_i$,~$i\in\bbN$, stand for unessential positive constants, which may take different values at different places in the manuscript.

%%%%%%%%%%%%%%%%%%%%%%%%%%%%
%%%%%%%%%%%%%%%%%%%%%%%%%%%%
%%%%%%%%%%%%%%%%%%%%%%%%%%%%
\section{Functional setting and families of actuators}\label{S:setting}
We introduce the appropriate subspaces involved in the analysis of the Cahn--Hilliard equations, as well as a strategy to construct explicit and appropriate families~$U_M$ and~$V_M$ of actuators and families~$\widetilde U_M$ and~$\widetilde V_M$  of auxiliary  functions.   

%%%%%%%%%%%%%%%%%%%%%%
\subsection{Function spaces}\label{sS:fun-sett}
The Lebesgue space~$H\coloneqq L^2(\Omega)$ is considered as the pivot space,~$H=H'$.
Next, we also introduce the spaces associated
to the shifted Laplacian~$A\colon V\to V'$, under one of the boundary conditions in~\eqref{bcs}. Namely, we introduce the Hilbert space
 \begin{subequations}\notag
 \begin{align}
 &V\coloneqq \begin{cases}W^{1,2}_0(\Omega)=\{\phi\in W^{1,2}(\Omega)\mid  y_{\ttr}\rest{\p\Omega}=0\},&\quad\mbox{for Dirichlet bcs};\\
 W^{1,2}(\Omega),&\quad\mbox{for Neumann or periodic bcs};
 \end{cases}
 \intertext{and the shifted Laplacian is given by}
&\langle A w,z\rangle_{V',V}\coloneqq(\nabla w,\nabla z)_{L^2(\Omega)^d}+(w,z)_{L^2(\Omega)}.
  \end{align}
\end{subequations}
with domain
 \begin{equation}\notag
\rmD(A) \coloneqq\{\phi\in H\mid A \phi\in H\}= \begin{cases}\{\phi\in W^{2,2}(\Omega)\mid  \phi\rest{\p\Omega}=0\},&\quad\mbox{for Dirichlet bcs};\\
\{\phi\in W^{2,2}(\Omega)\mid  \bfn\cdot\nabla \phi\rest{\p\Omega}=0\},&\quad\mbox{for Neumann bcs};\\
W^{2,2}(\Omega),&\quad\mbox{for periodic bcs}.
\end{cases}
\end{equation}
Hereafter, the spaces above are assumed endowed with the scalar products as follows,
 \begin{equation}\notag
(w,z)_H \coloneqq (w,z)_{L^2(\Omega)};\qquad(w,z)_V \coloneqq \langle A w,z\rangle_{V',V};\qquad(w,z)_{\rmD(A)} \coloneqq (Aw,Az)_H.
\end{equation}
In particular we see that~$A\colon V\to V'$ and~$A\colon \rmD(A)\to H$ are isometries. It will be convenient to introduce the (nonshifted) Laplacian operator~$A_0$ and seminorm~$\dnorm{\Bigcdot}{V_0}$ as follows
 \begin{subequations}\notag
 \begin{align}
 A_0\coloneqq A-\Id;\qquad&((w,z))_{V_0} \coloneqq\langle A_0 w,z\rangle_{V',V} \coloneqq(\nabla w,\nabla z)_{L^2(\Omega)^d},\\
&\dnorm{z}{V_0}^2 \coloneqq ((z,z))_{V_0}.     
\end{align}
\end{subequations}

%%%%%%%%%%%%%%%%%%%%%%%%%%%%
%%%%%%%%%%%%%%%%%%%%%%%%%%%%
\subsection{The actuators}\label{sS:act}
For a given rectangular domain~$\Omega=\bigtimes_{n=1}^d(0,L_n)$,
we fix a small domain~$\omega\subset\bbR^d$ with center of mass at the origin and we assume that
~$\omega$ is small enough for us to include~$d+2$ disjoint translations of~$\omega$ in~$\Omega$.
For simplicity, we shall take~$\omega$ as a rectangular domain
\begin{align}\label{omega}
\omega\coloneqq\bigtimes_{n=1}^d(-l_n,l_n).
\end{align}
Furthermore, we assume that there is a subset of~$d+1$ of those~$d+2$ translations of~$\omega$ contained in no affine hyperplane. We denote these by~$\omega_{1j}^1$,~$1\le j\le d+1$, and the remaining by~$\omega_{21}^1$.
We construct the sequence of pairs of families of actuators as follows.
In the case~$M=1$ we take indicator functions of the translated domains above
\begin{align}
U_1\coloneqq\{\Phi_j^1=\indf_{\omega_{1j}^1}\mid 1\le j\le d+1\}\subset H,\qquad V_1\coloneqq\{\Psi_j^1=\indf_{\omega_{2j}^1}\mid  j=1\}\subset H.\notag
\end{align}
In the case~$M>1$ we divide the rectangular domain into~$M^{d}$ rescaled copies (up to a translation) of itself including the  domains of the actuators, as illustrated in Fig.~\ref{fig.suppActSqu}.   This leads us to families as
\begin{align}\label{UVM}
U_M\coloneqq\{\Phi_j^M=\indf_{\omega_{1j}^M}\mid 1\le j\le M_\sigma\}\subset H,\qquad V_M\coloneqq\{\Psi_j^M=\indf_{\omega_{2j}^M}\mid 1\le j\le M_\varsigma\}\subset H,
\end{align}
with~$(M_\sigma,M_\varsigma)=((d+1)M^{d},1M^{d})$.

%%%%%%%%%%%%%%%%%%%%%%%%TIKZ BOXES
%%%% RECT %%%%%%%%%
\setlength{\unitlength}{.002\textwidth}
\newsavebox{\Rectfw}
\savebox{\Rectfw}(0,0){%
\linethickness{3pt}
{\color{black}\polygon(0,0)(120,0)(120,80)(0,80)(0,0)}%
}
\newsavebox{\RectrefRG}
\savebox{\RectrefRG}(0,0){%
{\color{white}\polygon*(0,0)(120,0)(120,80)(0,80)(0,0)}%
{\color{red}\polygon*(22.5,15)(37.5,15)(37.5,25)(22.5,25)(22.5,15)}%
{\color{red}\polygon*(22.5,55)(37.5,55)(37.5,65)(22.5,65)(22.5,55)}%
{\color{red}\polygon*(82.5,15)(97.5,15)(97.5,25)(82.5,25)(82.5,15)}%
{\color{green}\polygon*(82.5,55)(97.5,55)(97.5,65)(82.5,65)(82.5,55)}%
}
%%%%%%%%%%%%%%%%%%%%%%%%%%

\begin{figure}[ht!]
\begin{center}
\begin{picture}(500,100)%(0,0)
% % Rect2x2
\put(0,0){\usebox{\Rectfw}}
\put(0,0){\usebox{\RectrefRG}}
% % Rect4x4
\put(190,0){\usebox{\Rectfw}}
\put(190,0){\scalebox{.5}{\usebox{\RectrefRG}}}
 \put(250,0){\scalebox{.5}{\usebox{\RectrefRG}}}
 \put(250,40){\scalebox{.5}{\usebox{\RectrefRG}}}
 \put(190,40){\scalebox{.5}{\usebox{\RectrefRG}}}
% % Rect6x6
\put(380,0){\usebox{\Rectfw}}
\put(380,0){\scalebox{.333}{\usebox{\RectrefRG}}}
\put(380,26.666){\scalebox{.333}{\usebox{\RectrefRG}}}
\put(380,52.333){\scalebox{.333}{\usebox{\RectrefRG}}}
\put(420,0){\scalebox{.333}{\usebox{\RectrefRG}}}
\put(420,26.666){\scalebox{.333}{\usebox{\RectrefRG}}}
\put(420,52.333){\scalebox{.333}{\usebox{\RectrefRG}}}
\put(460,0){\scalebox{.333}{\usebox{\RectrefRG}}}
\put(460,26.666){\scalebox{.333}{\usebox{\RectrefRG}}}
\put(460,52.333){\scalebox{.333}{\usebox{\RectrefRG}}}
\put(40,85){$M=1$}
\put(230,85){$M=2$}
\put(420,85){$M=3$}
\put(14,20){$\omega_{11}^1$}
\put(14,62){$\omega_{13}^1$}
\put(74,20){$\omega_{12}^1$}
\put(74,62){$\omega_{21}^1$}
\linethickness{2pt}
{\color{blue}
\Dashline(460,0)(500,0){2}
\Dashline(500,0)(500,26.6666){2}
\Dashline(500,26.6666)(460,26.6666){2}
\Dashline(460,26.6666)(460,0){2}
}
\end{picture}
\end{center}
\caption{Supports of actuators, for rectangular~$\Omega\subset\bbR^2$.~$(M_\sigma,M_\varsigma)=M^{2}(3,1)$.} \label{fig.suppActSqu}
\end{figure}
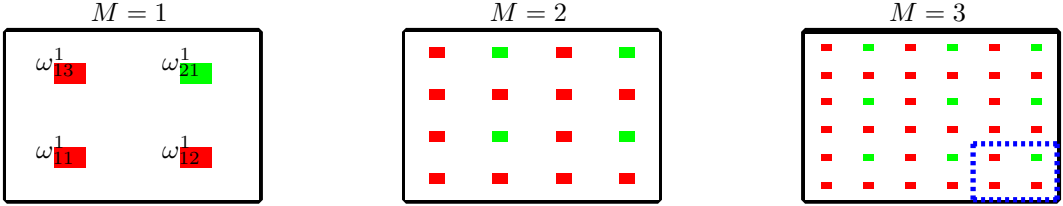

In the case~$d=2$ we can follow an analogue procedure for a triangular domain taking rescaled copies (up to a translation and a rotation) as illustrated in Fig.~\ref{fig.suppActTri}.

%%%%%%%%%%%%%%%%%%%%%%%%TIKZ BOXES
%%%% TRI %%%%%%%%%
\setlength{\unitlength}{.002\textwidth}
\newsavebox{\Trifw}
\savebox{\Trifw}(0,0){%
\linethickness{3pt}
{\color{black}\polygon(0,0)(120,0)(40,80)(0,0)}%
}
\newsavebox{\TrirefRG}
\savebox{\TrirefRG}(0,0){%
{\color{white}\polygon*(0,0)(120,0)(40,80)(0,0)}%
{\color{red}\polygon*(20,10)(35,10)(35,20)(20,20)(20,10)}%
{\color{red}\polygon*(80,10)(95,10)(95,20)(80,20)(80,10)}%
{\color{red}\polygon*(40,50)(55,50)(55,60)(40,60)(40,50)}%
{\color{green}\polygon*(45,20)(60,20)(60,30)(45,30)(45,20)}%
}
\newsavebox{\rTrirefRG}
\savebox{\rTrirefRG}(0,0){\rotatebox{180}{\usebox{\TrirefRG}}%
}
%%%%%%%%%%%%%%%%%%%%%%%%%%%%%%%%%%%

\begin{figure}[ht!]
\begin{center}
\begin{picture}(300,100)%(0,0)
%Tri1
\put(0,0){\usebox{\Trifw}}
\put(0,0){\usebox{\TrirefRG}}
% Tri2
 \put(190,0){\usebox{\Trifw}}
 \put(190,0){\scalebox{.5}{\usebox{\TrirefRG}}}
  \put(250,0){\scalebox{.5}{\usebox{\TrirefRG}}}
 \put(210,40){\scalebox{.5}{\usebox{\TrirefRG}}}
 \put(270,40){\scalebox{.5}{\usebox{\rTrirefRG}}}
\put(30,85){$M=1$}
\put(220,85){$M=2$}
\put(12,15){$\omega_{11}^1$}
\put(33,55){$\omega_{13}^1$}
\put(72,15){$\omega_{12}^1$}
\put(35,27){$\omega_{21}^1$}
\linethickness{1.5pt}
{\color{blue}%
\Dashline(270,40)(210,40){4}
\Dashline(210,40)(250,0){4}
\Dashline(250,0)(270,40){4}
}
\end{picture}
\end{center}
\caption{Supports of  actuators, for  triangular~$\Omega\subset\bbR^2$.~$(M_\sigma,M_\varsigma)=4^{M-1}(3,1)$. }
\label{fig.suppActTri}
\end{figure}

In addition, we shall need a set of auxiliary functions.
First of all, note that we have~$\indf_{\omega_{ij}^M}(x)=\indf_{\omega}(\fkR_{ij}^M(\fkH^M(\fkT_{ij}^M (x))))$ for a composition of a translation~$\fkT_{ij}^M$, a homotethy~$\fkH^M$, and a rotation~$\fkR_{ij}^M$. Namely
\begin{align}
\fkT_{ij}^M (x)\coloneqq x-c_{ij},\qquad\fkH^M z\coloneqq M^{-1}z,\qquad \fkR_{ij}^Mw=w,\notag
\end{align}
for rectangular domains where no nontrivial rotation is needed and, for the case of triangular planar domains, with rotations as
\begin{align}
\fkR_{ij}^Mw=\begin{bmatrix}\cos(\theta)&-\sin(\theta)\\\sin(\theta)&\cos(\theta)\end{bmatrix}w, \qquad\theta\in[0,2\pi).\notag
\end{align}
For example, in Fig.~2 for the case~$M=2$, we used thrice the (trivial) rotation with angle~$\theta=0$ and once the rotation with angle~$\theta=\pi$. We construct the auxiliary functions as follows. We introduce the function
\begin{subequations}\notag
\begin{align}
\varphi(z)\coloneqq \indf_\omega(z)\bigtimes_{n=1}^d\sin(\pi\tfrac{z_n+l_n}{2l_n}),\quad z\in\bbR^d,
\end{align}
with~$\omega$ as in~\eqref{omega}; note that~$\varphi$ is piecewise smooth and vanishes at the boundary of~$\omega$. Then, we use take
\begin{align}
\widetilde \Phi_j^M(x)\coloneqq \varphi^2(\fkR_{ij}^M(\fkH^M(\fkT_{ij}^M (x)))),\qquad \widetilde \Psi_j^M(x)\coloneqq \varphi(\fkR_{ij}^M(\fkH^M(\fkT_{ij}^M (x)))),
\end{align}
to build the sets of auxiliary functions as
\begin{align}
\widetilde U_M\coloneqq\{\widetilde \Phi_j^M\mid 1\le j\le M_\sigma\}\subset\rmD(A),\qquad \widetilde V_M\coloneqq\{\widetilde \Psi_j^M\mid 1\le j\le M_\varsigma\}\subset V.
\end{align}
\end{subequations}
Hence, the taken auxiliary functions can be seen as ``regularized actuators''. By introducing the linear spans of actuators and auxiliary functions as
\begin{equation}\notag
\clU_M\coloneqq\linspan U_M;\quad \clV_M\coloneqq\linspan V_M,\quad \widetilde \clU_M\coloneqq\linspan\widetilde U_M;\quad \widetilde \clV_M\coloneqq\linspan\widetilde V_M,
\end{equation}
we are ready to present two key auxiliary results as follows.
\begin{lemma}\label{L:poincare}
We have that the sequences~$(\alpha_{M}^H)_{M\in\bbN_+}$ and~$(\alpha_{M}^V)_{M\in\bbN_+}$ of constants
\begin{align}
&\alpha_{M}^H\coloneqq\inf_{w\in(V\bigcap\clV_M^\perp)\setminus\{0\}}\frac{\norm{w}{V}^2}{\norm{w}{H}^2};\qquad&&\alpha_{M}^{V}\coloneqq\inf_{w\in(\rmD(A)\bigcap\clU_M^\perp)\setminus\{0\}}\frac{\norm{w}{\rmD(A)}^2}{\norm{w}{V}^2};\notag
\end{align}
are divergent;~$\lim_{M\to+\infty}\alpha_{M}^H=+\infty$ and~$\lim_{M\to+\infty}\alpha_{M}^{V}=+\infty$.
 \end{lemma}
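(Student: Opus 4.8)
The plan is to reduce both statements to a single local Poincaré inequality on the cells of the partition and to track how its constant scales with~$M$. The geometric facts I would exploit are: at level~$M$ the domain~$\Omega$ is split into finitely many cells~$Q_j$, each of which is a rescaled (and, for the triangular domain, rotated) copy of a fixed reference cell; each cell contains the support of at least one actuator of each family (one support~$\omega_{2j}^M$ for~$V_M$, and~$d+1$ of the supports~$\omega_{1j}^M$ for~$U_M$); and the maximal cell diameter $\delta_M\coloneqq\max_j\diam(Q_j)$ tends to~$0$ as~$M\to+\infty$ (indeed $\delta_M=O(M^{-1})$ for rectangles and $\delta_M=O(2^{-M})$ for triangles). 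Since $w\in\clV_M^\perp$ (resp.\ $w\in\clU_M^\perp$) in~$H$ means exactly that $\int_{\omega_{2j}^M}w\,\ed x=0$ (resp.\ $\int_{\omega_{1j}^M}w\,\ed x=0$) for every actuator support, the orthogonality constraint amounts to a vanishing-average condition on a subregion of each cell.

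First I would establish, for a fixed cell~$Q$ with actuator support~$\omega\subset Q$, the estimate $\dnorm{w}{L^2(Q)}\le\bigl(1+(|Q|/|\omega|)^{1/2}\bigr)C_P(Q)\dnorm{\nabla w}{L^2(Q)}$ whenever $\int_\omega w\,\ed x=0$. This follows from the standard mean-zero Poincaré inequality $\dnorm{w-\overline w_Q}{L^2(Q)}\le C_P(Q)\dnorm{\nabla w}{L^2(Q)}$, where $\overline w_Q$ is the average over~$Q$, together with the elementary bound $|\overline w_Q|=|\overline w_Q-\overline w_\omega|\le|\omega|^{-1/2}\dnorm{w-\overline w_Q}{L^2(Q)}$ (using $\overline w_\omega=0$, writing the difference of averages as $|\omega|^{-1}\int_\omega(\overline w_Q-w)\,\ed x$, and applying Cauchy--Schwarz). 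A change of variables mapping~$Q$ to the reference cell shows $C_P(Q)=\diam(Q)\,C_0$ with~$C_0$ depending only on the fixed shape of the reference cell, while $|Q|/|\omega|$ is scale- and rotation-invariant, hence a fixed constant. Summing the resulting local bound $\dnorm{w}{L^2(Q_j)}^2\le C\delta_M^2\dnorm{\nabla w}{L^2(Q_j)}^2$ over the cells gives $\norm{w}{H}^2\le C\delta_M^2\dnorm{w}{V_0}^2\le C\delta_M^2\norm{w}{V}^2$, so that $\alpha_M^H\ge(C\delta_M^2)^{-1}\to+\infty$.

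For the second sequence I would avoid a separate second-order argument and instead reduce it to the first. Running the estimate of the previous paragraph with the supports~$\omega_{1j}^M$ in place of~$\omega_{2j}^M$ yields a constant~$\beta_M$ with $\beta_M^2=\inf_{w\in(V\cap\clU_M^\perp)\setminus\{0\}}\norm{w}{V}^2\norm{w}{H}^{-2}\ge(C\delta_M^2)^{-1}\to+\infty$; this is legitimate because each cell also contains at least one support~$\omega_{1j}^M$. Then, for $w\in\rmD(A)\cap\clU_M^\perp$, the identity $\norm{w}{V}^2=(Aw,w)_H$ and Cauchy--Schwarz give $\norm{w}{V}^2\le\norm{Aw}{H}\norm{w}{H}=\norm{w}{\rmD(A)}\norm{w}{H}\le\beta_M^{-1}\norm{w}{\rmD(A)}\norm{w}{V}$, whence $\norm{w}{V}\le\beta_M^{-1}\norm{w}{\rmD(A)}$ and therefore $\alpha_M^V\ge\beta_M^2\to+\infty$.

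The main obstacle is the local Poincaré step: obtaining the vanishing-average inequality with the average taken over the proper subregion~$\omega$ rather than over the whole cell, and certifying that its constant scales exactly like~$\diam(Q)$ uniformly over all cells and all~$M$. Both are settled by the reduction to a single reference cell, which works precisely because the partition is built from rescaled copies of one (or, in the triangular case, finitely many) fixed reference configuration with the volume ratio~$|\omega|/|Q|$ held fixed. A minor point to verify is the uniformity of~$C_P$ across the finitely many reference shapes used in the triangular refinement; since these are fixed in number and independent of~$M$, the supremum of their Poincaré constants is a harmless finite factor absorbed into~$C$.
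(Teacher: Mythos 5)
Your proof is correct, and for the sequence $(\alpha_M^V)$ it takes a genuinely different route from the paper. For $(\alpha_M^H)$ you are essentially reproducing, in self-contained form, the argument the paper imports from \cite[Sect.~5]{Rod21-sicon}: a mean-zero Poincar\'e inequality on each cell, corrected so that the average is taken over the actuator support $\omega\subset Q$ (your bound $|\overline w_Q|\le|\omega|^{-1/2}\dnorm{w-\overline w_Q}{L^2(Q)}$), combined with the scaling $C_P(Q)=\diam(Q)\,C_0$ and the fixed volume ratio $|\omega|/|Q|$; this is sound, including your remark that the finitely many rotated reference shapes of the triangular refinement only contribute a harmless maximum of constants. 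The real departure is in the second sequence: the paper follows \cite[Sects.~4.2 and A.7]{Rod21-jnls} and argues directly at the level of $\norm{\cdot}{\rmD(A)}/\norm{\cdot}{V}$, which forces it to show that an affine polynomial with vanishing averages over the $d+1$ supports in a cell must vanish --- precisely where the hypothesis that the $d+1$ translated supports lie in no affine hyperplane enters. Your interpolation shortcut, $\norm{w}{V}^2=\langle Aw,w\rangle_{V',V}=(Aw,w)_H\le\norm{w}{\rmD(A)}\norm{w}{H}\le\beta_M^{-1}\norm{w}{\rmD(A)}\norm{w}{V}$ with your first-order constant $\beta_M$ (legitimate for each of the boundary conditions in \eqref{bcs}, since for $w\in\rmD(A)$ the duality pairing reduces to the $H$-inner product), bypasses the second-order local analysis entirely and needs only one vanishing-average constraint per cell, yet delivers the same rate $\alpha_M^V\gtrsim\delta_M^{-2}$. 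What each approach buys: yours is shorter, self-contained, and shows that the affine independence of the actuator centers is not needed for this particular lemma (it may still be wanted elsewhere in the construction); the paper's direct route establishes the stronger local fact that the $d+1$ constraints annihilate all affine functions, which is the natural second-order analogue and is what one would need for a genuine Poincar\'e inequality on $\nabla w$ itself rather than one obtained by interpolating through $H$.
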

\begin{lemma}\label{L:M-and-lam}
Given~$\xi>0$, we can find~$\overline M\in\bbN_+$,~$\overline\lambda_1\ge0$, and~$\overline\lambda_2\ge0$ large enough so that
\begin{align}
&\norm{y}{\rmD(A)}^2+2\lambda_1 \norm{P_{\widetilde\clU_{M}}^{\clU_{M}^\perp}y}{\rmD(A)}^2
\ge \xi \norm{y}{V}^2, &&\quad\mbox{for all}\quad
y\in \rmD(A),\quad M\ge\overline M,\quad \lambda_1\ge\overline\lambda_1;\label{L:M-and-lamV}\\
&\norm{y}{V}^2+2\lambda_2 \norm{P_{\widetilde\clV_{M}}^{\clV_{M}^\perp}y}{V}^2
\ge  \xi  \norm{y}{H}^2, &&\quad\mbox{for all}\quad
y\in V,\quad M\ge\overline M,\quad \lambda_2\ge\overline\lambda_2.\label{L:M-and-lamH}
\end{align}
Furthermore~$\overline M=\ovlineC{\xi}$,~$\overline\lambda_1=\ovlineC{\xi}$, and~$\overline\lambda_2=\ovlineC{\xi}$.
 \end{lemma}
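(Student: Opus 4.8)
The plan is to deduce both inequalities from the spectral-gap estimates of Lemma~\ref{L:poincare} by splitting $y$ through the relevant oblique projection and absorbing the leftover projection term into the $\lambda$-weighted summand. I would prove \eqref{L:M-and-lamV} in full and then obtain \eqref{L:M-and-lamH} by the verbatim substitution $(\rmD(A),V,\clU_{M},\widetilde\clU_{M},\alpha_{M}^{V},\lambda_1)\rightsquigarrow(V,H,\clV_{M},\widetilde\clV_{M},\alpha_{M}^{H},\lambda_2)$, since the two settings are structurally identical.

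First I record the elementary facts used throughout. From the definitions of the scalar products one has the monotonicity $\norm{z}{H}\le\norm{z}{V}\le\norm{z}{\rmD(A)}$ on the respective domains: the left inequality follows from $\norm{z}{V}^2=\dnorm{z}{V_0}^2+\norm{z}{H}^2$, and the right one from $\norm{z}{V}^2=(Az,z)_H\le\norm{z}{\rmD(A)}\norm{z}{H}\le\norm{z}{\rmD(A)}\norm{z}{V}$. Next, because $\widetilde\clU_{M}\subset\rmD(A)$, the oblique projection $P\coloneqq P_{\widetilde\clU_{M}}^{\clU_{M}^\perp}$, with range $\widetilde\clU_{M}$ and kernel $\clU_{M}^\perp$, is well defined (as guaranteed by the construction) and maps $\rmD(A)$ into itself; hence every $y\in\rmD(A)$ decomposes as $y=Py+(\Id-P)y$ with $Py\in\widetilde\clU_{M}\subset\rmD(A)$ and $(\Id-P)y\in\rmD(A)\cap\clU_{M}^\perp$.

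For such a $y$, Lemma~\ref{L:poincare} applies to $(\Id-P)y$ and yields $\norm{(\Id-P)y}{V}^2\le(\alpha_{M}^{V})^{-1}\norm{(\Id-P)y}{\rmD(A)}^2$. Combining this with $\norm{y}{V}^2\le2\norm{Py}{V}^2+2\norm{(\Id-P)y}{V}^2$, the monotonicity $\norm{Py}{V}\le\norm{Py}{\rmD(A)}$, and $\norm{(\Id-P)y}{\rmD(A)}^2=\norm{y-Py}{\rmD(A)}^2\le2\norm{y}{\rmD(A)}^2+2\norm{Py}{\rmD(A)}^2$, I would obtain
\begin{equation}\notag
\xi\norm{y}{V}^2\le\frac{4\xi}{\alpha_{M}^{V}}\norm{y}{\rmD(A)}^2+\Bigl(\frac{4\xi}{\alpha_{M}^{V}}+2\xi\Bigr)\norm{Py}{\rmD(A)}^2 .
\end{equation}
Since $\alpha_{M}^{V}\to+\infty$ by Lemma~\ref{L:poincare}, I fix $\overline M\in\bbN_+$ so large that $\alpha_{M}^{V}\ge4\xi$ for all $M\ge\overline M$; then $4\xi/\alpha_{M}^{V}\le1$ and the right-hand side is at most $\norm{y}{\rmD(A)}^2+(1+2\xi)\norm{Py}{\rmD(A)}^2$. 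Choosing $\overline\lambda_1\coloneqq\tfrac12(1+2\xi)$, for every $\lambda_1\ge\overline\lambda_1$ the weight $2\lambda_1$ dominates $1+2\xi$, which is exactly \eqref{L:M-and-lamV}.

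Running the identical argument for \eqref{L:M-and-lamH} (now with $Q\coloneqq P_{\widetilde\clV_{M}}^{\clV_{M}^\perp}$, $\widetilde\clV_{M}\subset V$, the gap $\alpha_{M}^{H}\to+\infty$, and $\norm{Qy}{H}\le\norm{Qy}{V}$) produces a threshold index and $\overline\lambda_2\coloneqq\tfrac12(1+2\xi)$; after enlarging $\overline M$ to serve both inequalities, all three quantities $\overline M,\overline\lambda_1,\overline\lambda_2$ are nondecreasing in $\xi$ and therefore of the claimed form $\ovlineC{\xi}$. I do not foresee a real obstacle once Lemma~\ref{L:poincare} is granted; the only point requiring care is that the coefficient $1+2\xi$ of $\norm{Py}{\rmD(A)}^2$ is \emph{independent of $M$}, so that a single $\overline\lambda_1$ works for all $M\ge\overline M$. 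This is precisely why the crude triangle-inequality bound on $\norm{(\Id-P)y}{\rmD(A)}$, rather than any uniform-in-$M$ control of the oblique projection norms, is enough.
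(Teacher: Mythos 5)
Your proof is correct. The paper itself writes out no argument for this lemma: it establishes \eqref{L:M-and-lamH} by citing \cite[Lem.~2.6]{KunRod23-dcds} and asserts that \eqref{L:M-and-lamV} follows by similar arguments, in both cases resting only on the divergence of~$\alpha_M^H$ and~$\alpha_M^V$ from Lemma~\ref{L:poincare}. Your route --- splitting $y=Py+(\Id-P)y$ with $P=P_{\widetilde\clU_{M}}^{\clU_{M}^\perp}$, applying the gap constant of Lemma~\ref{L:poincare} to the component lying in $\rmD(A)\cap\clU_M^\perp$ (respectively $V\cap\clV_M^\perp$), and absorbing the $\norm{Py}{\rmD(A)}^2$ contribution into the $\lambda$-weighted term --- is precisely the mechanism the cited lemma relies on, so you have reconstructed the outsourced argument rather than found a genuinely different one; the benefit is that your version is self-contained and makes the constants explicit ($\overline M$ the first index with $\alpha_M^V\ge4\xi$ and $\alpha_M^H\ge4\xi$, and $\overline\lambda_1=\overline\lambda_2=\tfrac12(1+2\xi)$, all nondecreasing in $\xi$ as required for the $\ovlineC{\xi}$ claim). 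Two points you handle correctly and that are worth keeping explicit in any writeup: first, $Py\in\widetilde\clU_M\subset\rmD(A)$ is what licenses $\norm{Py}{V}\le\norm{Py}{\rmD(A)}$ and places $(\Id-P)y$ in $\rmD(A)\cap\clU_M^\perp$, where the infimum defining $\alpha_M^V$ applies; second, the coefficient $2\xi+4\xi/\alpha_M^V\le1+2\xi$ multiplying $\norm{Py}{\rmD(A)}^2$ is uniform in $M\ge\overline M$, which is exactly what allows a single $\overline\lambda_1$ to serve all admissible $M$ and matches the quantifier order in the statement.
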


The proofs of Lemmas~\ref{L:poincare} and~\ref{L:M-and-lam} follow by slight variations of arguments in the literature, we present details in Appendix~\ref{Apx:proofL:poincare} and~\ref{Apx:proofL:M-and-lam}.

%%%%%%%%%%%%%%%%%%%%%%%%%%%%
%%%%%%%%%%%%%%%%%%%%%%%%%%%%
\subsection{Feedback}\label{sS:K} We shall denote
  \begin{subequations}\label{OP-FeedK}
\begin{equation}
(y_1,y_2)\coloneqq y\coloneqq y_{\ttc}-y_{\ttr}.
\end{equation}
With the actuators as in~\eqref{UVM} we introduce the isomorphisms
\begin{align}\notag
U_M^\diamond u\coloneqq {\textstyle\sum_{j=1}^{M_{\sigma}}}\;u_j\Phi_j^M,\qquad
V_M^\diamond v\coloneqq {\textstyle\sum_{j=1}^{M_\varsigma}}\;v_j\Psi_j^M,
\end{align}
and take the explicit feedback with components as follows 
 \begin{align}
&\bfK_1y\coloneqq -\lambda_1(U_M^\diamond)^{-1}P_{\clU_{M}}^{\widetilde\clU_{M}^\perp}A^2P_{\widetilde\clU_{M}}^{\clU_{M}^\perp}y_1,&&\mbox{with}\quad\lambda_1\ge0;\\
&\bfK_2 y\coloneqq-\lambda_2(V_M^\diamond)^{-1}P_{\clV_{M}}^{\widetilde\clV_{M}^\perp} AP_{\widetilde\clV_{M}}^{\clV_{M}^\perp} (y_{2} +\nu_0 y_{1}),&&\mbox{with}\quad\lambda_2\ge0;
\end{align}
\end{subequations}
where constants~$M$,~$\lambda_1$, and~$\lambda_2$, shall be chosen large enough in order to guarantee that~$y=y_{\ttc}-y_{\ttr}$ converges to zero  exponentially as time increases.  With the feedback input components as above, the control dynamics~\eqref{sys-CH-control} reads
\begin{subequations}\label{sys-y-CH-Feed}
  \begin{align}  
 &\tfrac{\p}{\p t}y_{1\ttc} +  \nu_2\Delta^2 y_{1\ttc}  - \Delta\Bigl(f(y_{1\ttc})-\nu_1 y_{2\ttc}\Bigr)+g(y_{1\ttc})=h_1+U_M^\diamond\bfK_1y,\\
  &\tfrac{\p}{\p t}(y_{2\ttc} +\nu_0 y_{1\ttc} )- \Delta y_{2\ttc}=h_2+V_M^\diamond\bfK_2y,\\
 & \clG (y_{\ttc})\rest{\p\Omega}= 0,\qquad y_\ttc(0,\Bigcdot)= y_{\ttc0}.
\end{align}
\end{subequations}

 \begin{subequations}\label{Kmonotone}
Since~$P_{\widetilde\clU_{M}}^{\clU_{M}^\perp}=(P_{\clU_{M}}^{\widetilde\clU_{M}^\perp})^*$ (cf.~\cite[Lem.~3.4]{KunRodWal21} \cite[Lem.~3.8]{RodSturm20}),  the operators
 \begin{align}
&\bfF_1w\coloneqq U_M^\diamond\bfK_1w=-\lambda_1P_{\clU_{M}}^{\widetilde\clU_{M}^\perp}A^2P_{\widetilde\clU_{M}}^{\clU_{M}^\perp}w,\qquad \bfF_2 w\coloneqq V_M^\diamond\bfK_2w=-\lambda_2P_{\clV_{M}}^{\widetilde\clV_{M}^\perp} AP_{\widetilde\clV_{M}}^{\clV_{M}^\perp} w,
\end{align}
satisfy the  relations
\begin{equation}
(\bfF_1w,w)_H=-\lambda_1\norm{P_{\widetilde\clU_{M}}^{\clU_{M}^\perp}w}{\rmD(A)}^2\quad\mbox{and}\quad (\bfF_2w,w)_H=-\lambda_2\norm{P_{\widetilde\clV_{M}}^{\clV_{M}^\perp}w}{V}^2.
\end{equation}
\end{subequations}

%%%%%%%%%%%%%%%%%%%%%%%%%%%
%%%%%%%%%%%%%%%%%%%%%%%%%%%
%%%%%%%%%%%%%%%%%%%%%%%%%%%
\section{Stability of the difference to the target}\label{S:stab}
We show the stability of the dynamics of the difference~$y\coloneqq y_{\ttc}-y_{\ttr}$ to the target~$y_{\ttr}$, for~$M$,~$\lambda_1$, and~$\lambda_2$ large enough, under general assumptions on the nonlinearities. For~$y=(y_1,y_2)=y_{\ttc}-y_{\ttr}$, we find
\begin{subequations}\notag
  \begin{align}  
 &\tfrac{\p}{\p t}y_{1} +  \nu_2\Delta^2 y_{1}  - \Delta\Bigl(f(y_{1\ttc})-f(y_{1\ttr})-\nu_1 y_{2}\Bigr)+g(y_{1\ttc})-g(y_{1\ttr})=U_M^\diamond \bfK_1 y,\\
  &\tfrac{\p}{\p t}(y_{2} +\nu_0 y_{1} )- \Delta y_{2}=V_M^\diamond \bfK_2 y,\\
 & \clG y\rest{\p\Omega}= 0,\qquad y(0,\Bigcdot)= y_{0}= y_{\ttc0}-y_{\ttr0}.
\end{align}
\end{subequations}
By introducing the change of variables
\begin{equation}\label{chvar-y-to-z}
z=(z_1,z_2)\coloneqq\Xi y\coloneqq (y_1,y_{2} +\nu_0 y_{1}),
\end{equation}
and recalling~\eqref{OP-FeedK} and~\eqref{Kmonotone},  we obtain
\begin{subequations}\label{sys-z}
  \begin{align}  
 &\tfrac{\p}{\p t}z_{1} =-  \nu_2\Delta^2 z_{1}  -\nu_1\Delta(z_{2}-\nu_0z_1)+\Delta N_f(z_1)-N_g(z_1)+\bfF_1 z_1,\\
  &\tfrac{\p}{\p t}z_{2}= \Delta z_{2}- \nu_0\Delta z_{1}+  \bfF_2z_2,\\
 & \clG z\rest{\p\Omega}= 0,\qquad z(0,\Bigcdot)= z_{0}\coloneqq\Xi y_{0},
\intertext{with}
 &N_f(z_1)=N_f(t,z_1)\coloneqq f(y_{1\ttr}(t)+z_1)-f(y_{1\ttr}(t)),\\
 &N_g(z_1)=N_g(t,z_1)\coloneqq g(y_{1\ttr}(t)+z_1)-g(y_{1\ttr}(t)).
\end{align}
\end{subequations}

\begin{assumption}\label{A:NfNg1}
There exists an integer~$n\in\bbN_+$ and nonnegative constants~$C_f$, $C_g$,$\zeta_i^f$, $\eta_i^f$,$\zeta_i^g$,$\eta_i^g$, $1\le i\le n$, such that for almost all~$t>0$
\begin{align}
-\langle A_0 N_f(t,z_1),z_1\rangle_{\rmD(A)',\rmD(A)}&\le C_f\sum_{i=1}^n\norm{z_1}{H}^{\zeta_i^f}\norm{z_1}{\rmD(A)}^{\eta_i^f};\notag\\
-\langle N_g(t,z_1),z_1\rangle_{\rmD(A)',\rmD(A)}&\le C_g\sum_{i=1}^n\norm{z_1}{H}^{\zeta_i^g}\norm{z_1}{\rmD(A)}^{\eta_i^g};\notag
\end{align}
with~$\eta_i^f<2$,  $\eta_i^g<2$, and~$\zeta_i^f+\eta_i^f\ge2\le\zeta_i^g+\eta_i^g$.
\end{assumption}

The satisfiability of Assumption~\ref{A:NfNg1} will, in particular, require suitable boundedness properties for the target reference trajectory~$y_{1\ttr}$. We shall illustrate this point in Section~\ref{S:assumok}, where we  will see (cf. Assum.~\ref{A:yr-bdd}) that,    it holds for the nonlinearity~$f$ associated with the double-well potential~\eqref{polyFtau}, a suitable property is that~$y_{1\ttr}\in L^{\infty}(\bbR_+\times\Omega)$ and~$\nabla y_{1\ttr}\in L^\infty(\bbR_+\times\Omega)^d$. For example, in the autonomous case and with vanishing external forcing~$h$, there are equilibria~$\varphi\in W^{4,2}(\Omega)$ as shown in~\cite[Lem.~4.1]{BarbuColliGilMar17}. In particular,  for these equilibria trajectories we have that~$y_{1\ttr}(t)=\varphi\in L^{\infty}(\Omega)$ and~$\nabla y_{1\ttr}(t)=\nabla \varphi\in L^\infty(\Omega)^d$, due to the Agmon inequality, recall that~$d\in\{1,2,3\}$. See also Remark~\ref{R:equil} for the case of nonvanishing external forcings.

\begin{corollary}\label{C:NfNg1}
Let Assumption~\ref{A:NfNg1} hold true. Then, for any given~$\gamma>0$, there are nonnegative constants~$D_f$, $D_g$, $\zeta^f$, $\zeta^g$ such that for almost all~$t>0$, 
\begin{align}
-\langle A_0 N_f(t,z_1),z_1\rangle_{\rmD(A)',\rmD(A)}&\le D_f(1+\norm{z_1}{H}^{\zeta^f})\norm{z_1}{H}^2+\gamma\norm{z_1}{\rmD(A)}^{2};\notag\\
-\langle N_g(t,z_1),z_1\rangle_{\rmD(A)',\rmD(A)}&\le D_g(1+\norm{z_1}{H}^{\zeta^g})\norm{z_1}{H}^2+\gamma\norm{z_1}{\rmD(A)}^{2};\notag
\end{align}
with~$D_f=\ovlineC{C_f,\gamma^{-1}}$ and~$D_g=\ovlineC{C_g,\gamma^{-1}}$.
\end{corollary}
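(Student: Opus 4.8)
The plan is to treat the two inequalities separately and, in each case, to reduce matters to a termwise application of the (weighted) Young inequality to the summands supplied by Assumption~\ref{A:NfNg1}. I focus on the bound for~$N_f$; the one for~$N_g$ is entirely analogous, with~$A_0N_f$ replaced by~$N_g$ and the~$f$-exponents replaced by the~$g$-exponents. The guiding idea is that each factor~$\norm{z_1}{\rmD(A)}^{\eta_i^f}$ carries a subcritical power~$\eta_i^f<2$, so it can be absorbed into~$\gamma\norm{z_1}{\rmD(A)}^2$ at the cost of a~$\gamma$-dependent constant, while the companion power~$\zeta_i^f+\eta_i^f\ge2$ is precisely what guarantees that the leftover~$H$-power is admissible.

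Fix an index~$i$ with~$\eta_i^f>0$ and consider the summand~$\norm{z_1}{H}^{\zeta_i^f}\norm{z_1}{\rmD(A)}^{\eta_i^f}$. Since~$\eta_i^f<2$, the exponent~$p_i\coloneqq 2/\eta_i^f>1$ and its conjugate~$p_i'=2/(2-\eta_i^f)$ are admissible, so for every~$\delta>0$ the weighted Young inequality yields
\[
\norm{z_1}{H}^{\zeta_i^f}\norm{z_1}{\rmD(A)}^{\eta_i^f}\le \delta\,\norm{z_1}{\rmD(A)}^{2}+C_\delta\,\norm{z_1}{H}^{\frac{2\zeta_i^f}{2-\eta_i^f}},\qquad C_\delta=\ovlineC{\delta^{-1}}.
\]
The key observation is that the hypothesis~$\zeta_i^f+\eta_i^f\ge2$ is exactly equivalent to~$\tfrac{2\zeta_i^f}{2-\eta_i^f}\ge2$; hence the residual~$H$-power is at least~$2$, and I may write~$\norm{z_1}{H}^{\frac{2\zeta_i^f}{2-\eta_i^f}}=\norm{z_1}{H}^{r_i}\norm{z_1}{H}^{2}$ with~$r_i\coloneqq\tfrac{2\zeta_i^f}{2-\eta_i^f}-2\ge0$. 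Invoking the elementary inequality~$s^{a}\le1+s^{b}$, valid for~$0\le a\le b$ and~$s\ge0$, this is bounded by~$(1+\norm{z_1}{H}^{\zeta^f})\norm{z_1}{H}^2$ as soon as~$\zeta^f$ is chosen no smaller than each~$r_i$.

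The indices with~$\eta_i^f=0$ are handled directly and must be separated out, since then no Young splitting is needed: in that case~$\zeta_i^f\ge2$, and~$\norm{z_1}{H}^{\zeta_i^f}=\norm{z_1}{H}^{\zeta_i^f-2}\norm{z_1}{H}^2\le(1+\norm{z_1}{H}^{\zeta^f})\norm{z_1}{H}^2$ provided~$\zeta^f\ge\zeta_i^f-2$. Accordingly I set~$\zeta^f\coloneqq\max_{1\le i\le n}\{r_i,\ \zeta_i^f-2\}$. Summing the~$n$ estimates, multiplying by~$C_f$, and choosing~$\delta\coloneqq\gamma/(nC_f)$ so that the accumulated coefficient of~$\norm{z_1}{\rmD(A)}^2$ equals exactly~$\gamma$, I obtain the asserted bound with~$D_f\coloneqq\ovlineC{C_f,\gamma^{-1}}$, the dependence on~$\gamma^{-1}$ entering solely through~$C_\delta$. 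Repeating the computation with the~$g$-data gives the corresponding estimate for~$N_g$ with~$D_g=\ovlineC{C_g,\gamma^{-1}}$ and an analogous~$\zeta^g$.

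There is no genuine difficulty here; the whole content is the bookkeeping of exponents. The two points to verify carefully are that~$\eta_i^f<2$ is what permits raising~$\norm{z_1}{\rmD(A)}$ to exactly the power~$2$ in the Young split, and that~$\zeta_i^f+\eta_i^f\ge2$ is precisely what renders the surviving~$H$-power~$\ge2$, so that a factor~$\norm{z_1}{H}^2$ can be extracted and the remaining power controlled by~$1+\norm{z_1}{H}^{\zeta^f}$. Both are immediate from the stated conditions, and the degenerate case~$\eta_i^f=0$ is the only one requiring separate (trivial) treatment.
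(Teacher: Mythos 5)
Your proof is correct and follows essentially the same route as the paper: a termwise weighted Young inequality with exponents $2/\eta_i^f$ and $2/(2-\eta_i^f)$, the parameter chosen as $\gamma/(nC_f)$ so the $\rmD(A)$-contributions sum to exactly $\gamma$, the observation that $\zeta_i^f+\eta_i^f\ge2$ forces the residual $H$-power to be at least $2$, and a separate (trivial) treatment of the indices with $\eta_i^f=0$. The only difference is cosmetic: your exponent $r_i=\tfrac{2\zeta_i^f}{2-\eta_i^f}-2\ge0$ carries the correct sign, whereas the paper's $\beta_i^f$ is written with the subtraction reversed (an evident typo there).
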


\begin{proof}
The result follows by the Young inequality. In the case~$\eta_i^f\in(0,1)$, for each~$\gamma>0$, taking~$\varepsilon\coloneqq (nC_f)^{-1}\gamma$,
\begin{align}
\norm{z_1}{H}^{\zeta_i^f}\norm{z_1}{\rmD(A)}^{\eta_i^f}=\varepsilon^{-\frac{\eta_i^f}{2}}\norm{z_1}{H}^{\zeta_i^f}\varepsilon^{\frac{\eta_i^f}{2}}\norm{z_1}{\rmD(A)}^{\eta_i^f}\le \varepsilon^{-\frac{\eta_i^f}{2-\eta_i^f}}\norm{z_1}{H}^{\frac{2\zeta_i^f}{2-\eta_i^f}}+\varepsilon\norm{z_1}{\rmD(A)}^{2}\notag
\end{align}
and we see that~$\frac{2\zeta_i^f}{2-\eta_i^f}\ge\frac{2(2-\eta_i^f)}{2-\eta_i^f}=2$. Thus, with~$\beta_i^f\coloneqq 2-\frac{2\zeta_i^f}{2-\eta_i^f}\ge0$, we obtain
\begin{align}
\norm{z_1}{H}^{\zeta_i^f}\norm{z_1}{\rmD(A)}^{\eta_i^f}\le \varepsilon^{-\frac{\eta_i^f}{2-\eta_i^f}}\norm{z_1}{H}^{\beta_i^f}\norm{z_1}{H}^2+\varepsilon\norm{z_1}{\rmD(A)}^{2}.\notag
\end{align}

Next in the case~$\eta_i^f=0$, with~$\beta_i^f=2-\zeta_i^f\ge0$, it holds
\begin{align}
\norm{z_1}{H}^{\zeta_i^f}\norm{z_1}{\rmD(A)}^{\eta_i^f}=\norm{z_1}{H}^{\zeta_i^f}=\norm{z_1}{H}^{\beta_i^f}\norm{z_1}{H}^2\le \norm{z_1}{H}^{\beta_i^f}\norm{z_1}{H}^2+\varepsilon\norm{z_1}{\rmD(A)}^{2}.\notag
\end{align}
Therefore, with~$C_0=\max\Bigl\{1,\max\limits_{1\le i\le n}\Bigl\{\varepsilon^{-\frac{\eta_i^f}{2-\eta_i^f}}\Bigr\}\Bigr\}$ we arrive at
\begin{align}
-\langle A_0 N_f(t,z_1),z_1\rangle_{\rmD(A)',\rmD(A)}&\le C_f C_0\sum_{i=1}^n\norm{z_1}{H}^{\beta_i^f}\norm{z_1}{H}^2+nD_f\varepsilon\norm{z_1}{\rmD(A)}^{2}\notag\\
&\le D_f(1+\norm{z_1}{H}^{\zeta^f})\norm{z_1}{H}^2+\gamma\norm{z_1}{\rmD(A)}^{2}\notag
\end{align}
with~$D_f\coloneqq nC_f C_0$ and~$\zeta^f=\max\limits_{1\le i\le n}\{\beta_i^f\}$.
Finally, note that we can finish the proof by repeating the same argument for the terms~$N_g(t,z_1)$ and~$\norm{z_1}{H}^{\zeta_i^g}\norm{z_1}{\rmD(A)}^{\eta_i^g}$.
\end{proof}

To simplify the exposition, we write the dynamics of~$z$ as 
  \begin{align}  
 \dot z_{1} &=  -\nu_2A_0^2 z_{1}-\nu_1\nu_0A_0 z_{1}+ \nu_1A_0z_{2}-A_0 N_f(z_1)-N_g(z_1)+ \bfF_1z_1\notag\\
   \dot z_{2}&= -A_0 z_{2} +\nu_0A_0z_{1}+  \bfF_2z_2,\notag
\end{align}
which we can also rewrite in matrix form as the evolutionary equation
\begin{subequations}\label{sys-z-evol}
\begin{align}
\dot z&=-\bfA z-\bfA_{\rm rc} z-\bfN(z)+\bfF z,\qquad z(0)= z_{0}
\intertext{with the linear operators}
\bfA&\coloneqq\begin{bmatrix}
 \nu_2A^2&\zero\\
\zero&A
\end{bmatrix};\qquad \bfA_{\rm rc}\coloneqq\begin{bmatrix}(\nu_0\nu_1-2\nu_2) A_0-\nu_2\Id&-\nu_1A_0\\-\nu_0A_0&-\Id
\end{bmatrix};
\intertext{and the nonlinear and control operators as}
\bfN(z)&\coloneqq\begin{bmatrix}
A_0N_f(z_1)+N_g(z_1)\\
\zero
\end{bmatrix};\qquad \qquad\bfF\coloneqq\begin{bmatrix}\bfF_1&\zero\\\zero&\bfF_2
\end{bmatrix}.
\end{align}
\end{subequations}

\subsection{Main result}
The stabilizability to a given reference trajectory shall follow as a corollary of the following result.
\begin{theorem}\label{T:main} Let Assumption~\ref{A:NfNg1} hold true and let us take the actuators and auxiliary functions as in Section~\ref{sS:act}. Then, for any given~$\mu>0$ and~$R>0$, there are~$\overline M\in\bbN_+$,  $\overline \lambda_1\ge0$ and~$\overline \lambda_2\ge0$
such that, if~$M\ge\overline  M$, $\lambda_1\ge\overline \lambda_1$, and~$\lambda_2\ge\overline \lambda_2$, then the solution of~\eqref{sys-z-evol} satisfies
\begin{align}\label{exp-mu-z}
\norm{z(t)}{H\times H}^2&\le\rme^{-2\mu(t-s)}\norm{z(s)}{H\times H}^2,\quad\mbox{for every }z_{0}\mbox{ such that }\norm{z_0}{H\times H}\le R. 
\end{align}
Moreover,  for all~$t\ge s\ge0$, denoting~$\bbR_{s+}\coloneqq(s,+\infty)$,
\begin{align}\label{energy-z}
&\norm{z(t)}{H\times H}^2+\tfrac{\nu_2}2\norm{z_1}{L^2( \bbR_{s+},\rmD(A))}^2+\tfrac12\norm{z_2}{L^2(\bbR_{s+},V)}^2+2\mu\norm{z}{L^2(\bbR_{s+},H\times H)}^2\le\norm{z(s)}{H\times H}^2.
\end{align}
Furthermore, $\overline M=\ovlineC{\mu,C_f,C_g,\nu_0,\nu_1,\nu_2^{-1}}$,  $\overline \lambda_1=\ovlineC{\mu,C_f,C_g,\nu_0,\nu_1,\nu_2^{-1}}$, and~$\overline \lambda_2=\ovlineC{\mu}$
\end{theorem}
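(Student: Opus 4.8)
The plan is to run a Lyapunov/energy estimate on the evolutionary form~\eqref{sys-z-evol} in the pivot space~$H\times H$, and to convert the resulting dissipation into the prescribed decay rate by invoking the spectral-type estimates of Lemma~\ref{L:M-and-lam}. Pairing~\eqref{sys-z-evol} with~$z$ (justified by the regularity of solutions obtained in Section~\ref{S:exiuni}, or at the Galerkin level) gives
\begin{equation}\notag
\tfrac12\tfrac{\ed}{\ed t}\norm{z}{H\times H}^2=-(\bfA z,z)_{H\times H}-(\bfA_{\rm rc}z,z)_{H\times H}-(\bfN(z),z)_{H\times H}+(\bfF z,z)_{H\times H}.
\end{equation}
Because~$A$ is an isometry~$\rmD(A)\to H$ and~$V\to V'$, the principal part produces the clean dissipation~$(\bfA z,z)_{H\times H}=\nu_2\norm{z_1}{\rmD(A)}^2+\norm{z_2}{V}^2$, while the feedback contributes the nonpositive terms recorded in~\eqref{Kmonotone}, namely~$(\bfF z,z)_{H\times H}=-\lambda_1\norm{P_{\widetilde\clU_M}^{\clU_M^\perp}z_1}{\rmD(A)}^2-\lambda_2\norm{P_{\widetilde\clV_M}^{\clV_M^\perp}z_2}{V}^2$.

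Next I would dispose of the two indefinite contributions. The term~$(\bfA_{\rm rc}z,z)_{H\times H}$ is strictly lower order, since every block of~$\bfA_{\rm rc}$ contains at most~$A_0$: using~$\norm{w}{V}^2\le\norm{w}{H}\norm{w}{\rmD(A)}$ and~$|\langle A_0z_2,z_1\rangle_{V',V}|\le\norm{z_2}{V}\norm{z_1}{V}$ together with Young's inequality, it is bounded, for arbitrarily small~$\delta>0$, by~$\delta(\norm{z_1}{\rmD(A)}^2+\norm{z_2}{V}^2)+C_\delta\norm{z}{H\times H}^2$. The nonlinear term is controlled by Corollary~\ref{C:NfNg1} with its free parameter~$\gamma$ chosen small, giving~$-(\bfN(z),z)_{H\times H}\le C(R)\norm{z_1}{H}^2+2\gamma\norm{z_1}{\rmD(A)}^2$, where~$C(R)\coloneqq D_f(1+R^{\zeta^f})+D_g(1+R^{\zeta^g})$ once the a priori smallness~$\norm{z_1}{H}\le\norm{z}{H\times H}\le R$ is available. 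Choosing~$\delta,\gamma$ small enough to leave a fixed fraction (say a quarter) of each principal dissipation term free, I reserve~$\tfrac{\nu_2}2\norm{z_1}{\rmD(A)}^2$ and~$\tfrac12\norm{z_2}{V}^2$ for the energy bound and keep~$\tfrac{\nu_2}4\norm{z_1}{\rmD(A)}^2$,~$\tfrac14\norm{z_2}{V}^2$ to be traded for lower-order norms.

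The decisive step is to combine these reserved quantities with the feedback terms through Lemma~\ref{L:M-and-lam}. Applying~\eqref{L:M-and-lamV} to~$z_1$ (admissible once~$M$ and~$\lambda_1$ exceed the thresholds of Lemma~\ref{L:M-and-lam}, the latter rescaled by~$\nu_2$) yields~$\tfrac{\nu_2}4\norm{z_1}{\rmD(A)}^2+\lambda_1\norm{P_{\widetilde\clU_M}^{\clU_M^\perp}z_1}{\rmD(A)}^2\ge\tfrac{\nu_2}4\xi\norm{z_1}{V}^2\ge\tfrac{\nu_2}4\xi\norm{z_1}{H}^2$, and applying~\eqref{L:M-and-lamH} to~$z_2$ yields~$\tfrac14\xi\norm{z_2}{H}^2$; here I used~$\norm{w}{V}^2=\dnorm{w}{V_0}^2+\norm{w}{H}^2\ge\norm{w}{H}^2$. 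Fixing~$\xi$ so large (depending on~$\mu$,~$C_f$,~$C_g$, the~$\nu_i$, and on~$R$ through~$C(R)$) that~$\tfrac{\nu_2}4\xi\ge C(R)+C_\delta+2\mu$ and~$\tfrac14\xi\ge C_\delta+2\mu$, everything collapses into
\begin{equation}\notag
\tfrac{\ed}{\ed t}\norm{z}{H\times H}^2+\tfrac{\nu_2}2\norm{z_1}{\rmD(A)}^2+\tfrac12\norm{z_2}{V}^2+2\mu\norm{z}{H\times H}^2\le0,
\end{equation}
valid at every time at which~$\norm{z}{H\times H}\le R$.

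The genuine obstacle, and the source of the semiglobal character, is that~$C(R)$ is superlinear in the state norm, so this differential inequality is only conditional on the bound~$\norm{z(t)}{H\times H}\le R$; I would close this by continuation. Setting~$T^\ast\coloneqq\sup\{\tau\ge0\mid\norm{z(t)}{H\times H}\le R\text{ for all }t\in[0,\tau]\}>0$, the collapsed inequality holds on~$[0,T^\ast)$ and Gronwall's lemma gives~$\norm{z(t)}{H\times H}^2\le\rme^{-2\mu t}\norm{z_0}{H\times H}^2\le R^2$ there; hence~$\norm{z}{H\times H}$ cannot attain~$R$ and~$T^\ast=+\infty$. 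The differential inequality is therefore global, and integrating it yields both conclusions: Gronwall between~$s$ and~$t$ gives the exponential decay~\eqref{exp-mu-z}, while integrating the retained dissipative terms over~$\bbR_{s+}$ gives the energy estimate~\eqref{energy-z}. The asserted dependence of~$\overline M,\overline\lambda_1$ on~$(\mu,C_f,C_g,\nu_0,\nu_1,\nu_2^{-1})$ and of~$\overline\lambda_2$ on~$\mu$ is then read off from the choice of~$\xi$ and the constants furnished by Lemma~\ref{L:M-and-lam}.
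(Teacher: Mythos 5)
Your proposal is correct and follows essentially the same route as the paper: test \eqref{sys-z-evol} with~$z$, exploit the isometries~$A\colon \rmD(A)\to H$, $A\colon V\to V'$ and the dissipativity of the feedback from~\eqref{Kmonotone}, absorb the coupling and nonlinear terms via Young's inequality and Corollary~\ref{C:NfNg1}, and then invoke Lemma~\ref{L:M-and-lam} with~$\xi$ large enough to dominate the lower-order constants plus~$2\mu$, closing the conditional estimate with the invariance of the ball of radius~$R$. Your explicit continuation argument for~$T^\ast$ is a slightly more careful rendering of the paper's observation that the differential inequality keeps~$\norm{z(t)}{H\times H}\le R$, and your remark that the thresholds depend on~$R$ through~$C(R)$ matches the constants actually produced in the paper's proof (the theorem's displayed dependence list omits~$R$, but the proof itself includes it).
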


%%%%%%%%%%%%%%%%%%%%%%
%%%%%%%%%%%%%%%%%%%%%%
\begin{proof}
By multiplying the dynamics, in~\eqref{sys-z-evol}, by~$2z=(2z_1,2z_2)$, we obtain
\begin{align}
\tfrac{\rmd}{\rmd t}\norm{z}{H\times H}^2
&=-2\nu_2\norm{z_1}{\rmD(A)}^2-2\norm{z_2}{V}^2+2(2\nu_2-\nu_0\nu_1)\dnorm{z_1}{V_0}^2+2\nu_2\norm{z_1}{H}^2+2\norm{z_2}{H}^2\notag\\
&\quad+2(\nu_0+\nu_1)(A_0z_1,z_2)_H-2\langle A_0N_f(z_1),z_1\rangle_{\rmD(A)',\rmD(A)} -2\langle N_g(z_1),z_1\rangle_{\rmD(A)',\rmD(A)}\notag\\
&\quad+2(\bfF z,z)_{H\times H}.\notag
\end{align}
Now, using Corollary~\ref{C:NfNg1}, with~$\gamma=\nu_2>0$, $D=\max\{D^f,D^g\}\ge0$, $\zeta=\max\{\zeta^f,\zeta^g\}\ge0$, in combination with the Young inequality
\begin{align}
\tfrac{\rmd}{\rmd t}\norm{z}{H\times H}^2&\le
-2\nu_2\norm{z_1}{\rmD(A)}^2+(4\nu_2+(\nu_0+\nu_1)^2)\norm{z_1}{V}^2 +2D(1+\norm{z_1}{H}^\zeta)\norm{z_1}{H}^2+\nu_2\norm{z_1}{\rmD(A)}^2\notag\\
&\quad-\norm{z_2}{V}^2+\norm{z_2}{H}^2+2( \bfF z,z)_{H\times H}\notag
\end{align}
with~$D=\ovlineC{C_f,C_g,\nu_2^{-1}}\ge0$, where~$C_f$ and~$C_g$ are as Assumption~\ref{C:NfNg1}. Next, from~\eqref{Kmonotone},  we can infer that
\begin{align}
\tfrac{\rmd}{\rmd t}\norm{z}{H\times H}^2&\le-\nu_2\norm{z_1}{\rmD(A)}^2+(4\nu_2+(\nu_0+\nu_1)^2+2D+2D\norm{z_1}{H}^\zeta)\norm{z_1}{H}^2\notag\\
&\quad-2\lambda_1\norm{P_{\widetilde\clU_{M}}^{\clU_{M}^\perp}z_1}{\rmD(A)}^2-\norm{z_2}{V}^2+\norm{z_2}{H}^2-2\lambda_2\norm{P_{\widetilde\clV_{M}}^{\clV_{M}^\perp}z_2}{V}^2.\notag
\end{align}
Using Lemma~\ref{L:M-and-lam},    namely  \eqref{L:M-and-lamV}  for  $z_1$ with 
 \[\xi := 2\nu_2^{-1}\left( 4\nu_2+(\nu_0+\nu_1)^2+2D +2\mu+2DR^\zeta \right), \]  and  \eqref{L:M-and-lamH}   for $z_2$ with $\xi := 2\left(1+2\mu \right)$,  we can conclude that  there exists~$(\overline M,\overline \lambda_1,\overline \lambda_2)\in\bbN_+\times\overline\bbR_+\times\overline\bbR_+$ as
\begin{align}
\overline M=\ovlineC{\mu,C_f,C_g,\nu_0,\nu_1,\nu_2,\nu_2^{-1},R},\quad \overline\lambda_1=\ovlineC{\mu,C_f,C_g,\nu_0,\nu_1,\nu_2,\nu_2^{-1},R},  \quad \mbox{and}\quad \overline\lambda_2=\ovlineC{\mu},\notag
\end{align}
such that for all~$M\ge\overline M$, $\lambda_1\ge \overline\lambda_1$, and~$\lambda_2\ge \overline\lambda_2=\ovlineC{\mu}$, 
we have
\begin{align}
\tfrac{\rmd}{\rmd t}\norm{z}{H\times H}^2&\le-\frac{\nu_2}{2}\norm{z_1}{\rmD(A)}^2-\frac12\norm{z_2}{V}^2-(2\mu+2DR^\zeta-2D\norm{z_1}{H}^\zeta)\norm{z_1}{V}^2-2\mu\norm{z_2}{H}^2\notag\\
&= -\frac{\nu_2}{2}\norm{z_1}{\rmD(A)}^2-\frac12\norm{z_2}{V}^2-2\mu\norm{(z_1,z_2)}{H\times H}^2
-2D(R^\zeta-\norm{z_1}{H}^\zeta)\norm{z_1}{H}^2\label{energy-z-2}\\
&\le -2\mu\norm{z}{H\times H}^2-2D(R^\zeta-\norm{z}{H\times H}^\zeta)\norm{z_1}{H}^2,\label{energy-z-3}
\end{align}
where in \eqref{energy-z-2},  we have used that ~$\norm{z_1}{V}\ge\norm{z_1}{H}$.
Observe that~\eqref{energy-z-3} implies that:
\begin{align}
\mbox{If }\norm{z(0)}{H\times H}\le R,\quad\mbox{then }\tfrac{\rmd}{\rmd t}\norm{z}{H\times H}^2
&\le-2\mu\norm{z}{H\times H}^2,\mbox{ for all }t\ge0\notag
\end{align}
which implies~\eqref{exp-mu-z}.
In particular, we see that
\begin{align}
\mbox{If }\norm{z(0)}{H\times H}\le R,\quad\mbox{then }\norm{z(t)}{H\times H}&\le R,\mbox{ for all }t\ge0.\notag
\end{align}
Thus, by~\eqref{energy-z-2}, we find
\begin{align}
\tfrac{\rmd}{\rmd t}\norm{z}{H\times H}^2
+\frac{\nu_2}{2}\norm{z_1}{\rmD(A)}^2+\frac12\norm{z_2}{V}^2+2\mu\norm{(z_1,z_2)}{H\times H}^2
\le0,\notag
\end{align}
from which, after time integration, we obtain~\eqref{energy-z}.
\end{proof}

It follows the aimed stabilizability to the given trajectory~$y_{\ttr}$.

\begin{corollary}\label{C:main}
Let Assumption~\ref{A:NfNg1} hold and let us take actuators and auxiliary functions as in Section~\ref{sS:act}. Let us be given~$\mu>0$ and~$R>0$, and let~$(\overline M,\overline \lambda_1,\overline \lambda_2)=(\overline M,\overline \lambda_1,\overline \lambda_2)(\mu,R)$ be given by Theorem~\ref{T:main}. 
Then, the difference~$(y_1,y_2)=y\coloneqq y_{\ttc}-y_{\ttr}$, between the controlled trajectory solving~\eqref{sys-y-CH-Feed} and the targeted trajectory solving~\eqref{sys-CH}, satisfies
\begin{equation}
\norm{y(t)}{H\times H}^2\le 4(1+\nu_0^2)^2\rme^{-2\mu (t-s)}\norm{y(s)}{H\times H}^2,\quad \mbox{if }\norm{(y_1(0),y_2(0)+\nu_0y_1(0))}{H\times H}\le R.\notag
\end{equation}
\end{corollary}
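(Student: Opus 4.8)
The plan is to transfer the exponential decay obtained for the transformed state $z$ in Theorem~\ref{T:main} back to the original difference $y=y_\ttc-y_\ttr$, exploiting that the change of variables $z=\Xi y=(y_1,y_2+\nu_0 y_1)$ from~\eqref{chvar-y-to-z} is a bounded linear isomorphism on $H\times H$ with bounded inverse. Concretely, $\Xi$ and its inverse act componentwise as $\Xi y=(y_1,\,y_2+\nu_0 y_1)$ and $\Xi^{-1}z=(z_1,\,z_2-\nu_0 z_1)$, so both directions of the estimate reduce to comparing the $H\times H$ norms of $y$ and $z$.

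First I would observe that the hypothesis $\norm{(y_1(0),y_2(0)+\nu_0 y_1(0))}{H\times H}\le R$ is exactly the condition $\norm{z_0}{H\times H}\le R$, since $z_0=\Xi y_0=(y_1(0),y_2(0)+\nu_0 y_1(0))$. Hence Theorem~\ref{T:main} applies to the solution $z$ of~\eqref{sys-z-evol} and yields
\begin{align}
\norm{z(t)}{H\times H}^2\le\rme^{-2\mu(t-s)}\norm{z(s)}{H\times H}^2,\qquad t\ge s\ge0.\notag
\end{align}

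Next I would establish the two-sided norm equivalence between $y$ and $z$. Using the elementary bound $\norm{u+v}{H}^2\le2\norm{u}{H}^2+2\norm{v}{H}^2$ together with $z_1=y_1$ and $z_2=y_2+\nu_0 y_1$, one gets
\begin{align}
\norm{z}{H\times H}^2=\norm{y_1}{H}^2+\norm{y_2+\nu_0 y_1}{H}^2\le(1+2\nu_0^2)\norm{y_1}{H}^2+2\norm{y_2}{H}^2\le2(1+\nu_0^2)\norm{y}{H\times H}^2,\notag
\end{align}
and, symmetrically, from $y_1=z_1$ and $y_2=z_2-\nu_0 z_1$,
\begin{align}
\norm{y}{H\times H}^2\le2(1+\nu_0^2)\norm{z}{H\times H}^2.\notag
\end{align}
Chaining the lower equivalence, the decay of $z$, and the upper equivalence produces
\begin{align}
\norm{y(t)}{H\times H}^2&\le2(1+\nu_0^2)\norm{z(t)}{H\times H}^2\le2(1+\nu_0^2)\rme^{-2\mu(t-s)}\norm{z(s)}{H\times H}^2\notag\\
&\le4(1+\nu_0^2)^2\rme^{-2\mu(t-s)}\norm{y(s)}{H\times H}^2,\notag
\end{align}
which is precisely the claimed estimate.

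I do not anticipate any genuine obstacle here: the entire argument is change-of-variables bookkeeping, and the only point requiring care is to keep track of the constants so that the two factors of $2(1+\nu_0^2)$ multiply to exactly $4(1+\nu_0^2)^2$. All the analytic content---the stabilization itself, the admissible choice of $(\overline M,\overline\lambda_1,\overline\lambda_2)$, and the role of Assumption~\ref{A:NfNg1}---is already subsumed in Theorem~\ref{T:main}.
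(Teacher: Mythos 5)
Your proposal is correct and follows essentially the same route as the paper: apply Theorem~\ref{T:main} to $z=\Xi y$ after noting that the smallness hypothesis is exactly $\norm{z_0}{H\times H}\le R$, and then chain the two norm comparisons $\norm{y}{H\times H}^2\le2(1+\nu_0^2)\norm{z}{H\times H}^2$ and $\norm{z}{H\times H}^2\le2(1+\nu_0^2)\norm{y}{H\times H}^2$ to produce the factor $4(1+\nu_0^2)^2$. The constants and the elementary Young-type estimates match the paper's proof.
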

\begin{proof}
Recalling~\eqref{chvar-y-to-z}, for~$y=y_\ttc-y_\ttr$ we find
\begin{equation}
\begin{split}
\norm{y(t)}{H\times H}^2&=\norm{(y_1(t),y_2(t))}{H\times H}^2=\norm{z_1}{H}^2+\norm{z_2-\nu_0z_1}{H}^2\le (1+2\nu_0^2)\norm{z_1}{H}^2+2\norm{z_2}{H}^2\notag\\
&\le2(1+\nu_0^2)\norm{z(t)}{H\times H}^2.
\end{split}
\end{equation}
With~$(\overline M,\overline \lambda_1,\overline \lambda_2)$ given by Theorem~\ref{T:main} it follows that, for~$M\ge \overline M$, $\lambda_1\ge\overline \lambda_1$, and~$\lambda_2\ge\overline \lambda_2$, the inequality~\eqref{exp-mu-z} holds, giving us
\begin{equation}
\begin{split}
\norm{y(t)}{H\times H}^2&\le 2(1+\nu_0^2)\rme^{-2\mu (t-s)}\norm{z(s)}{H\times H}^2\notag\\
&\hspace{-3em}= 2(1+\nu_0^2)\rme^{-2\mu (t-s)}\norm{(y_1(s),y_2(s)+\nu_0y_1(s))}{H\times H}^2\le4(1+\nu_0^2)^2\rme^{-2\mu (t-s)}\norm{y(s)}{H\times H}^2,\notag
\end{split}
\end{equation}
which finishes the proof.
\end{proof}

%%%%%%%%%%%%%%%%%%%%%%%%
%%%%%%%%%%%%%%%%%%%%%%%%
\subsection{Boundedness of the control input}\label{sS:bddinput}
We show an upper bound for the infinite time-horizon energy of the control input~$\bfu\coloneqq(\bfK_1 y,\bfK_2 y)\in\bbR^{M_\sigma+M_\varsigma}$. By direct computations
\begin{align}
&\norm{\bfu}{L^2(\bbR_+,\bbR^{M_\sigma+M_\varsigma})}^2=\norm{\bfK_1 y}{L^2(\bbR_+,\bbR^{M_\sigma})}^2+\norm{\bfK_2 y}{L^2(\bbR_+,\bbR^{M_\varsigma})}^2\notag\\
&\hspace{1em}=\norm{(U_M^\diamond)^{-1}\bfF_1 z_1}{L^2(\bbR_+,\bbR^{M_\sigma})}^2+\norm{(V_M^\diamond)^{-1}\bfF_2 z_2}{L^2(\bbR_+,\bbR^{M_\varsigma})}^2\le D_{M}^\lambda\norm{z}{L^2(\bbR_+,H\times H)}^2\notag\\
\intertext{with}
&D_{M}^\lambda\coloneqq \lambda_1^2\dnorm{(U_M^\diamond)^{-1}}{}^2\norm{P_{\clU_{M}}^{\widetilde\clU_{M}^\perp}A^2P_{\widetilde\clU_{M}}^{\clU_{M}^\perp}}{\clL(H)}^2 +\lambda_2^2\dnorm{(V_M^\diamond)^{-1}}{}^2\norm{P_{\clV_{M}}^{\widetilde\clV_{M}^\perp} AP_{\widetilde\clV_{M}}^{\clV_{M}^\perp}}{\clL(H)}^2.\notag
\end{align}
with~$\dnorm{(U_M^\diamond)^{-1}}{}^2\coloneqq\norm{(U_M^\diamond)^{-1}}{\clL(H,\bbR^{M_\sigma})}^2$ and~$\dnorm{(V_M^\diamond)^{-1}}{}^2\coloneqq\norm{(V_M^\diamond)^{-1}}{\clL(H,\bbR^{M_\sigma})}^2$. By~\eqref{exp-mu-z},  we obtain the bound 
\begin{align}
\norm{\bfu}{L^2(\bbR_+,\bbR^{M_\sigma+M_\varsigma})}^2
&\le(2\mu)^{-1}D_{M}^\lambda \norm{z(0)}{H\times H}^2<+\infty,\notag
\end{align}
 depending on the norm~$\norm{z(0)}{H\times H}$ of initial difference and on the feedback parameters tuple~$(M,\lambda_1,\lambda_2)$. Setting~$\dnorm{P_{M,1}}{}\coloneqq\norm{P_{\clU_{M}}^{\widetilde\clU_{M}^\perp}}{\clL(H)}$ and~$\dnorm{P_{M,2}}{}\coloneqq\norm{P_{\clV_{M}}^{\widetilde\clV_{M}^\perp}}{\clL(H)}$ we  find that 
\begin{align}\notag
D_{M}^\lambda&\le \lambda_1\dnorm{(U_M^\diamond)^{-1}}{}^2\dnorm{P_{M,1}}{}^2\norm{A^2\rest{\widetilde\clU_{M}}}{\clL(H)}+\lambda_2\dnorm{(V_M^\diamond)^{-1}}{}^2\dnorm{P_{M,2}}{}^2\norm{A\rest{\widetilde\clV_{M}}}{\clL(H)},
\end{align}
thus depending on the  operator norms~$\dnorm{P_{M,1}}{}$ and~$\dnorm{P_{M,2}}{}$  of the oblique projections. These operator norms are bounded and can be computed following~\cite[Cor.~2.9]{KunRod19-cocv} and arguments as in~\cite[Sect.~6]{Rod21-aut}.

%%%%%%%%%%%%%%%%%%%%%%%%%%%
%%%%%%%%%%%%%%%%%%%%%%%%%%%
%%%%%%%%%%%%%%%%%%%%%%%%%%%
\section{Existence and uniqueness of solutions}\label{S:exiuni}
We have shown the exponential stability of system~\eqref{sys-z} (for~$M$, $\lambda_1$, and~$\lambda_2$ large enough) where we have assumed that solutions exist. Here, we show the existence of a suitable class of weak solutions for~\eqref{sys-y-CH-Feed}, under assumptions which include the double-well polynomial potential~\eqref{polyFtau}. Here we consider an arbitrary~$(M,\lambda_1,\lambda_2)\in\bbN_+\times\overline\bbR_+\times\overline\bbR_+$.
Note that the case~$(\lambda_1,\lambda_2)=(0,0)$ corresponds to the free dynamics as in~\eqref{sys-CH}.

\begin{assumption}\label{A:h}
The external force satisfies~$h\in L^2_{\rm loc}(\bbR_+,V'\times V')$.
\end{assumption}

\begin{assumption}\label{A:fgV}
The nonlinearity~$f$ in~\eqref{sys-CH} satisfies~$f=F'$, with~$F\ge0$ and~$F(z)\in L^1(\Omega)$ for all~$z\in\rmD(A^\frac32)$. Further, there are constants~$C_{f*}\ge0$ and~$C_{g*}\ge0$ such that, for all~$v\in \rmD(A)$,
\begin{align}
\norm{f(v)}{V'}^2\le C_{f*}\norm{v}{V}^{2}(1+\norm{v}{H}^2\norm{v}{V}^2)&\quad\mbox{and}\quad
\norm{g(v)}{V'}^2\le C_{g*}\norm{v}{V}^{2}(1+\norm{v}{H}^2\norm{v}{V}^2);\notag\\
-\langle A_0 f(v),v\rangle_{\rmD(A)',\rmD(A)}\le C_{f*}\norm{v}{\rmD(A)}\norm{v}{V}&\quad\mbox{and}\quad -\langle g(t,v),v\rangle_{\rmD(A)',\rmD(A)}\le C_{g*}\norm{v}{\rmD(A)}\norm{v}{V}.\notag
\end{align}
\end{assumption}

\begin{assumption}\label{A:N-dif}
The nonlinearities~$f$ and~$g$ in~\eqref{sys-CH} satisfy, with constants~$\underline C_{f*}\ge0$ and~$\underline C_{g*}\ge0$, and for all~$(v,w)\in \rmD(A^\frac32)\times\rmD(A^\frac32)$,
   \begin{align}  
 \norm{A_0(f(v)- f(w))}{V'}
&\le\underline C_{f*}(1+\norm{(v,w)}{V\times V})(1+\norm{(v,w)}{\rmD(A)\times\rmD(A)})\norm{v- w}{V} \notag\\
\norm{g(v)- g(w)}{V'} &\le \underline C_{g*}(1+\norm{(v,w)}{V\times V})(1+\norm{(v,w)}{\rmD(A)\times\rmD(A)})\norm{v-w}{V}\notag
\end{align}
\end{assumption}

%%%%%%%%%%%%%%%%%%%%%%%%%%%
%%%%%%%%%%%%%%%%%%%%%%%%%%%
\subsection{General bounds for Galerkin approximations}\label{sS:auxbounds-gen}
We have that both targeted and controlled trajectories satisfy a system as
\begin{subequations}\label{sys-y-htilde}
  \begin{align}
 &\tfrac{\p}{\p t}y_{1} +  \nu_2\Delta^2 y_{1}  - \Delta\Bigl(f(y_{1})-\nu_1 y_{2}\Bigr)+g(y_{1})=\widetilde h_1^y,\\
  &\tfrac{\p}{\p t}(y_{2} +\nu_0 y_{1} )- \Delta y_{2}=\widetilde h_2^y,\qquad \clG (y_{})\rest{\p\Omega}= 0,\qquad y(0,\Bigcdot)= y_{0},
\end{align}
\end{subequations}
where~$\widetilde h=(\widetilde h_1,\widetilde h_2)$ represents the sum of external forcings, including the control one, see~\eqref{sys-y-CH-Feed} and~\eqref{Kmonotone},
\begin{align}
& \widetilde h_1(y)=h_1+\chi\bfF_1(\widetilde w_1-w_{r1})\quad\mbox{and}\quad
\widetilde h_2(y)=h_2+\chi\bfF_2(\widetilde w_2-w_{r2}).\notag
\end{align}
with~$w\coloneqq\Xi y=(y_1,y_2+\nu_0y_1)$ and~$\Xi$ as in~\eqref{chvar-y-to-z}.
Here~$\chi\in\{0,1\}$ distinguishes the cases of free and controlled dynamics.
Thus, with
\begin{align}
& \widehat h_1(w_1)=h_1+\chi\bfF_1( w_1-w_{r1})\quad\mbox{and}\quad
\widehat h_2(w_2)=h_2+\chi\bfF_2( w_2-w_{r2}).\notag
\end{align}
it follows
\begin{subequations}\label{sys-w}
  \begin{align}  
 &\dot w_{1} =- \nu_2A_0^2 w_{1} - \nu_0\nu_1A_0 w_{1}+ \nu_1A_0 w_{2} + \Delta f(w_{1})-g(w_{1})+\widehat h_1(w_1),\\
  &\dot w_{2}= -A_0 w_{2}+\nu_0A_0w_{1}+\widehat h_2(w_1).\qquad w(0,\Bigcdot)= w_{0},
\end{align}
\end{subequations}

Let~$\clE_N=\linspan E_N$, where~$E_N$ is the subset of ``the'' first eigenfunctions of the Laplacian.

We consider Galerkin approximations of~\eqref{sys-w}, namely, as follows
\begin{subequations}\label{sys-w-GalN}
  \begin{align}  
 &\dot w^N_{1} =- \nu_2A_0^2 w^N_{1} - \nu_0\nu_1A_0 w^N_{1}+ \nu_1A_0 w^N_{2} + P_{\clE_N}\left(\Delta f(w^N_{1})-g(w^N_{1})+\widehat h_1(w^N_{1})\right),\\
  &\dot w^N_{2}= -A_0 w^N_{2}+\nu_0A_0w^N_{1}+P_{\clE_N}\widehat h_2(w^N_2).\qquad w^N(0,\Bigcdot)= (P_{\clE_N}w_{01},P_{\clE_N}w_{01}),
\end{align}
\end{subequations}
where~$w_{0}\eqqcolon(w_{01},w_{02})$. By testing with ~$2w^N$, we obtain
\begin{align}
\tfrac{\rmd}{\rmd t}\norm{w^N}{H\times H}^2&\le-2\nu_2\norm{A_0w^N_1}{H}^2-2\dnorm{w^N_2}{V_0}^2-2\nu_0\nu_1\dnorm{w^N_1}{V_0}^2+2(\nu_1+\nu_0)\dnorm{w^N_1}{V_0}\dnorm{w^N_2}{V_0}\notag\\
&\quad+2( P_{\clE_N}\Delta f(w^N_{1})-P_{\clE_N}g(w^N_1)+P_{\clE_N}\widehat h_1(w^N_1),w^N_1)_H+2( P_{\clE_N}\widetilde h_2^w,w^N_2)_H,\notag
\end{align}
and by the Young inequality
\begin{align}
2\norm{A_0w^N_1}{H}^2&=2\norm{Aw^N_1-w^N_1}{H}^2=2\norm{Aw^N_1}{H}^2+2\norm{w^N_1}{H}^2-4(Aw^N_1,w^N_1)_H\notag\\
&\ge2\norm{Aw^N_1}{H}^2+2\norm{w^N_1}{H}^2-\norm{Aw^N_1}{H}^2-4\norm{w^N_1}{H}^2=\norm{w^N_1}{\rmD(A)}^2-2\norm{w^N_1}{H}^2,\notag
\end{align}
hence, again by Young inequalities and Assumption~\ref{A:fgV},
\begin{align}
\tfrac{\rmd}{\rmd t}\norm{w^N}{H\times H}^2&\le-\nu_2\norm{w^N_1}{\rmD(A)}^2-\dnorm{w^N_2}{V_0}^2+C_1\norm{w^N_1}{V}^2\notag\\
&\quad-2(  P_{\clE_N}A_0 f(w^N_{1})-P_{\clE_N}g(w^N_1)+P_{\clE_N}\widehat h_1(w^N_1),w^N_1)_H+2( P_{\clE_N} \widehat h_2(w^N_2),w^N_2)_H\notag\\
&\hspace{-4em}\le-\nu_2\norm{w^N_1}{\rmD(A)}^2-\dnorm{w^N_2}{V_0}^2+C_1\norm{w^N_1}{V}^2\notag\\
&\hspace{-3em}+2(C_{f*}+C_{g*})\norm{w^N_1}{\rmD(A)}\norm{w^N_1}{V}+2( P_{\clE_N} \widehat h_1(w^N_1),w^N_1)_H+2( P_{\clE_N} \widehat h_2(w^N_2),w^N_2)_H\notag\\
&\hspace{-4em}\le-\frac{\nu_2}2\norm{w^N_1}{\rmD(A)}^2-\dnorm{w^N_2}{V_0}^2+C_2\norm{w^N_1}{V}^2+2(  P_{\clE_N}\widehat h_1(w^N_1),w^N_1)_H+2( P_{\clE_N} \widehat h_2(w^N_2),w^N_2)_H,\notag
\end{align}
with~$C_2$ independent of~$N$. From Young inequalities and
\begin{align}
& P_{\clE_N}\widehat h_j(w^N_j)=P_{\clE_N}h_j+ P_{\clE_N}\chi\bfF_j(w_j^N-w_{\ttr j}),\qquad j\in\{1,2\},\notag
\end{align}
it follows that
\begin{align}
\tfrac{\rmd}{\rmd t}\norm{w^N}{H\times H}^2&\le-\frac{\nu_2}4\norm{w^N_1}{\rmD(A)}^2-\frac12\norm{w^N_2}{V}^2+C_2\norm{w^N_1}{V}^2+C_3\norm{(h_1, h_2)}{\rmD(A)'\times V'}^2+C_3\norm{w^N_2}{H}^2\notag\\
&\quad\!\!+C_3\norm{\chi\bfF_1(w_1^N-w_{\ttr 1}),w^N_1)}{H}^2+C_3\norm{\chi\bfF_2(w_2^N-w_{\ttr 2}),w^N_1)}{H}^2.\notag\\
&\le-\frac{\nu_2}4\norm{w^N_1}{\rmD(A)}^2-\frac12\norm{w^N_2}{V}^2+C_4\norm{w^N_1}{V}^2+C_4\norm{w^N_2}{H}^2\notag\\
&\quad\!\!+C_3\norm{h}{\rmD(A)'\times V'}^2+C_4\chi\norm{w_{\ttr}}{H\times H}^2,\notag
\end{align}
with~$C_3$ and~$C_4$ independent of~$N$. 
By interpolation, $\norm{w^N_1}{V}\le C_5\norm{w^N_1}{\rmD(A)}\norm{w^N_1}{H}$ and
\begin{align}
\frac{\nu_2}8\norm{w^N_1}{\rmD(A)}^2+\frac12\norm{w^N_2}{V}^2+\tfrac{\rmd}{\rmd t}\norm{w^N}{H\times H}^2&\le C_6\norm{w^N}{H\times H}^2+C_3\norm{h}{\rmD(A)'\times V'}^2+\chi C_4\norm{w_{\ttr}}{H\times H}^2,\notag
\end{align}
with~$C_6$ independent of~$N$. By Assumption~\ref{A:h}, the Gronwall lemma, and time integration it follows that
\begin{align}\label{weakbdd-GalN}
\|w^N\|_{L^\infty((0,T),H\times H){\,\textstyle\bigcap\,}L^2((0,T),\rmD(A)\times V)}\le C_7,
\end{align}
with~$C_7$ independent of~$N$ (firstly for the case~$\chi=0$ and afterwards for the case~$\chi=1$).

%%%%%%%%%%%%%%%%%%%%%%%%%%%
%%%%%%%%%%%%%%%%%%%%%%%%%%%
\subsection{Further bounds for Galerkin approximations}\label{sS:auxbounds-part}
We shall need more regularity for the solutions. For this purpose we  introduce~$\fkp^N$ (resembling a chemical potential; cf.~\cite[Eq.~(1.3)]{BarbuColliGilMar17}\cite[Eq.~(2)]{BrunkEggerHabrich23}\cite[Sect.2]{Fritz24}) as follows
\begin{equation}\label{fkp}
\fkp^N\coloneqq \nu_2A_0 w_1^N +\nu_0\nu_1 w_1^N+P_{\clE_N}f(w_1^N).
\end{equation}
Testing the dynamics of~$w^N_1$ with~$2\nu_2Aw^N_1$, we find the estimates
\begin{align}
&\nu_2\tfrac{\rmd}{\rmd t}\norm{w^N_1}{V}^2=2\nu_2(\dot w^N_1,Aw^N_1)_H\notag\\
&\hspace{0em}=2( -A_0\fkp^N+\nu_1A_0w^N_2-P_{\clE_N}g(w^N_1)+P_{\clE_N}\widehat h_1(w^N_1),\nu_2Aw^N_1)_H\notag\\
&\hspace{0em}=2( -A_0\fkp^N+\nu_1A_0w^N_2-P_{\clE_N}g(w^N_1)+P_{\clE_N}\widehat h_1(w^N_1),\fkp^N-\nu_0\nu_1 w^N_1-P_{\clE_N}f(w^N_1)+\nu_2w^N_1)_H\notag\\
&\hspace{0em}=-2\dnorm{\fkp^N}{V_0}^2+2( A_0\fkp^N,\nu_0\nu_1 w^N_1+P_{\clE_N}f(w^N_1)+\nu_2w^N_1)_H\notag\\
&\hspace{1em}+2( \nu_1A_0w^N_2-P_{\clE_N}g(w^N_1)+P_{\clE_N}\widehat h_1(w^N_1),\fkp^N-\nu_0\nu_1 w^N_1-P_{\clE_N}f(w^N_1)+\nu_2w^N_1)_H,\notag
\end{align}
and, recalling Assumption~\ref{A:fgV} and denoting~$\langle F(w^N_1),1\rangle\coloneqq\int_\Omega F(w^N_1)\,\rmd\Omega$,
\begin{align}
2\tfrac{\rmd}{\rmd t}\langle  F(w^N_1),1\rangle&=2( f(w^N_1),\dot w^N_1)_H=2( P_{\clE_N}f(w^N_1),\dot w^N_1)_H\notag\\
&=2( P_{\clE_N} f(w^N_1), -A_0\fkp^N+\nu_1A_0w^N_2-P_{\clE_N}g(w^N_1)+P_{\clE_N}\widehat h_1(w^N_1))_H.\notag
\end{align} 
Therefore, we obtain
\begin{align}
\tfrac{\rmd}{\rmd t}(\nu_2\norm{w^N_1}{V}^2+2\langle F(w^N_1),1\rangle)&=-2\dnorm{\fkp^N}{V_0}^2+2\nu_0\nu_1( A_0\fkp^N, w^N_1)_H\notag\\
&\hspace{1em}
+2(\nu_1 A_0w^N_2-P_{\clE_N}g(w^N_1)+P_{\clE_N}\widehat h_1(w^N_1),\fkp^N-\nu_0\nu_1 w^N_1)_H\notag\\
&\hspace{1em}+2( A_0\fkp^N+\nu_1A_0w^N_2-P_{\clE_N}g(w^N_1)+P_{\clE_N}\widehat h_1(w^N_1),\nu_2w^N_1)_H,\notag
\end{align} 
and, with~$r_{gh}\coloneqq P_{\clE_N}g(w^N_1)-P_{\clE_N}\widehat h_1(w^N_1)$,
\begin{align}
&\tfrac{\rmd}{\rmd t}(\nu_2\norm{w^N_1}{V}^2+2\langle F(w^N_1),1\rangle)\notag\\
&=-2\dnorm{\fkp^N}{V_0}^2+2( A_0\fkp^N, \nu_0\nu_1w^N_1+\nu_1w^N_2+\nu_2w^N_1)_H
-2( r_{gh},\fkp^N-\nu_0\nu_1 w^N_1)_H\notag\\
&\hspace{1em}-2( \nu_1A_0w^N_2,\nu_0\nu_1w^N_1)_H+2( \nu_1A_0w^N_2-r_{gh},\nu_2w^N_1)_H\notag\\
&\hspace{0em}\le-(2-\gamma)\dnorm{\fkp^N}{V_0}^2+\gamma^{-1}\dnorm{\nu_0\nu_1w^N_1+\nu_1w^N_2+\nu_2w^N_1}{V_0}^2+(2+\gamma^{-1})\norm{r_{gh}}{V'}^2+\gamma\norm{\fkp^N}{V}^2\notag\\
&\hspace{1em}+(\nu_0\nu_1)^2\norm{w^N_1}{V}^2+\nu_2^2\norm{w^N_1}{H}^2+\nu_1^2\dnorm{w^N_2}{V_0}^2+(\nu_0\nu_1-\nu_2)^2\dnorm{w^N_1}{V_0}^2,\notag
\end{align}
and, using~$\norm{\fkp^N}{V}^2=\dnorm{\fkp^N}{V_0}^2+\norm{\fkp^N}{H}^2$, gives
\begin{align}
&\tfrac{\rmd}{\rmd t}(\nu_2\norm{w^N_1}{V}^2+2\langle F(w^N_1),1\rangle)\notag\\
&\le-(2-2\gamma)\dnorm{\fkp^N}{V_0}^2+(2+\gamma^{-1})\norm{r_{gh}}{V'}^2+(C_8+C_9\gamma^{-1})\dnorm{w^N_1}{V}^2+C_8\dnorm{w^N_2}{V}^2+\gamma\norm{\fkp^N}{H}^2,\notag
\end{align}
with~$C_8$ and~$C_9$ independent of~$N$ and~$\gamma$. 
Next, for every~$\varrho>0$ and all~$\gamma>0$,
\begin{align}
\varrho\tfrac{\rmd}{\rmd t}\norm{w^N_2}{H}^2&=2\varrho(\dot w^N_2,w^N_2)_H\notag\\
&=-2\varrho( A_0 w^N_{2},w^N_2)_H +2\nu_0\varrho( A_0w^N_{1},w^N_2)_H+ 2\varrho( P_{\clE_N}\widehat h_2(w^N_2),w^N_2)_H\notag\\
&\le-(2-\gamma)\varrho\norm{w^N_2}{V_0}^2 +\varrho\gamma^{-1}\nu_0^2\norm{w^N_1}{V_0}^2+ \gamma\varrho\norm{w^N_2}{V}^2+\gamma^{-1}\varrho\norm{\widehat h_2(w^N_2)}{V'}^2\notag\\
&\le-(2-2\gamma)\varrho\norm{w^N_2}{V_0}^2 +\varrho\gamma^{-1}\nu_0^2\norm{w^N_1}{V_0}^2+ \gamma\varrho\norm{w^N_2}{H}^2+\gamma^{-1}\varrho\norm{\widehat h_2(w^N_2)}{V'}^2\notag.
\end{align}

Hence, choosing~$\gamma=\frac12$, we have that
\begin{align}
&\tfrac{\rmd}{\rmd t}(\nu_2\norm{w^N_1}{V}^2+2\langle F(w^N_1),1\rangle+\varrho\norm{w^N_2}{H}^2)\notag\\
&\hspace{4em}\le -\dnorm{\fkp^N}{V_0}^2+4\norm{r_{gh}}{V'}^2+C_{10}\norm{w^N_1}{V}^2+C_{10}\norm{w^N_2}{V}^2+\tfrac12\norm{\fkp^N}{H}^2\notag\\
&\hspace{5em}-\varrho\dnorm{w^N_2}{V_0}^2+\varrho C_{10}\dnorm{w^N_1}{V_0}^2+ \tfrac12\varrho\norm{w^N_2}{H}^2+2\varrho\norm{\widehat h_2(w^N_2)}{V'}^2,\notag
\end{align}
with~$C_{10}$ independent of~$\varrho$ and~$N$. Choosing now~$\varrho=1+C_{10}$, 
it follows that
\begin{align}
&\dnorm{\fkp^N}{V_0}^2+\dnorm{w^N_2}{V_0}^2+\tfrac{\rmd}{\rmd t}(\nu_2\dnorm{w^N_1}{V_0}^2+2\langle F(w^N_1),1\rangle+\varrho\norm{w^N_2}{H}^2)\notag\\
&\hspace{2em}\le 4\norm{r_{gh}}{V'}^2+C_{11}\norm{w^N_1}{V}^2+C_{11}\norm{w^N_2}{H}^2+\tfrac12\norm{\fkp^N}{H}^2 +C_{11}\norm{\widehat h_2(w^N_2)}{V'}^2,\notag
\end{align}
with~$C_{11}$ independent of~$\varrho$ and~$N$. 
Denoting, by simplicity, the ``free energy'' functional (a variation of~\cite[Eq.~(4)]{Caginalp88})
\begin{align}\label{fkW1}
\fkW(w^N)\coloneqq\nu_2\norm{w^N_1}{V}^2+2\langle F(w^N_1),1\rangle+\varrho\norm{w^N_2}{H}^2,\end{align}
we find
\begin{align}
&\dnorm{\fkp^N}{V_0}^2+\dnorm{w^N_2}{V_0}^2+\tfrac{\rmd}{\rmd t}\fkW(w^N)\le C_{12}\left(\norm{w^N}{V\times H}^2+\norm{\fkp^N}{H}^2+\norm{g(w^N_1)}{V'}^2+\norm{\widehat h(w^N)}{V'\times V'}^2\right).\notag
\end{align}
From~\eqref{fkp} and the Young inequality we find, for all~$\eta>0$,
\begin{align}
2\norm{\fkp^N}{H}^2&=2(\nu_2A_0 w^N_1 +\nu_0\nu_1 w^N_1+P_{\clE_N}f(w^N_1),\fkp^N)_H\notag\\
&\le2\nu_2\dnorm{w^N_1}{V_0}\dnorm{\fkp^N}{V_0}+2\nu_0\nu_1\norm{w^N_1}{H}\norm{\fkp^N}{H}+2\norm{ f(w^N_1)}{V'}\norm{\fkp^N}{V}\notag\\
&\le 2\eta\dnorm{\fkp^N}{V_0}^2 +\norm{\fkp^N}{H}^2 +(\eta^{-1} \nu_2^2+2(\nu_0\nu_1)^2)\norm{w^N_1}{V}^2+(\eta^{-1}+2)\norm{f(w^N_1)}{V'}^2,\notag
\end{align}
which leads to
\begin{align}
&\dnorm{\fkp^N}{V_0}^2+\dnorm{w^N_2}{V_0}^2+\tfrac{\rmd}{\rmd t}\fkW(w^N)\notag\\
&\hspace{3em}\le 2\eta C_{12}\dnorm{\fkp^N}{V_0}^2+C_{12}(1+ \eta^{-1} \nu_2^2+2(\nu_0\nu_1)^2))\norm{w^N_1}{V}^2 \notag\\
&\hspace{4em}+C_{12}\left(\norm{w^N_2}{H}^2+\norm{g(w^N_1)}{V'}^2+\norm{\widehat  h(w^N)}{V'\times V'}^2\right)
+C_{12}(\eta^{-1}+2)\norm{f(w^N_1)}{V'}^2.
\notag
\end{align}
By Assumption~\ref{A:fgV} and~\eqref{weakbdd-GalN}, we have that
\begin{align}
&\dnorm{\fkp^N}{V_0}^2+\dnorm{w^N_2}{V_0}^2+\tfrac{\rmd}{\rmd t}\fkW(w^N)\notag\\
&\hspace{3em}\le 2\eta C_{12}\dnorm{\fkp^N}{V_0}^2+C_{13}(1+ \eta^{-1})\norm{w^N_1}{V}^2 +C_{13}(1+ \eta^{-1})\norm{w^N_1}{H}^2\norm{w^N_1}{V}^4 \notag\\
&\hspace{4em}+C_{12}\left(\norm{w^N_2}{H}^2+\norm{\widehat  h(w^N)}{V'\times V'}^2\right)\notag\\
&\hspace{3em}\le 2\eta C_{12}\dnorm{\fkp^N}{V_0}^2+C_{13}(1+ \eta^{-1})\norm{w^N_1}{V}^2 +C_{14}(1+ \eta^{-1})\norm{w^N_1}{V}^4 \notag\\
&\hspace{4em}+C_{12}\left(\norm{w^N_2}{H}^2+\norm{\widehat  h(w^N)}{V'\times V'}^2\right).
\notag
\end{align}

Now, we choose~$\eta=(4C_{12})^{-1}$ and obtain
\begin{align}
&\tfrac12\dnorm{\fkp^N}{V_0}^2+\dnorm{w_2^N}{V_0}^2+\tfrac{\rmd}{\rmd t}\fkW(w^N)\notag\\
&\qquad\le C_{15}\left(\norm{w_1^N}{V}^2+\norm{w_1^N}{V}^4+\norm{w_2^N}{H}^2+\norm{\widehat  h(w^N)}{V'\times V'}^2\right)\notag\\
&\qquad\le C_{15}\left((1+\norm{w_1^N}{V}^2)\norm{w_1^N}{V}^2+\norm{w_2^N}{H}^2+\norm{\widehat  h(w^N)}{V'\times V'}^2\right).\label{bddGal}
\end{align}

Now, by using
\begin{align}
& \widehat h_j(w^N_j)=h_j+ \chi\bfF_j(w_j^N-w_{\ttr j}),\qquad j\in\{1,2\},\label{hath}\\
& \norm{\widehat  h(w^N)}{V'\times V'}^2\le 2\norm{  h}{V'\times V'}^2+2\chi C_{16}\norm{w^N}{H\times H}^2+2\chi C_{16}\norm{w_\ttr}{H\times H}^2,\notag
\end{align}
where we have also used~$\norm{z}{V'}\le\norm{z}{H}$. Note that we also have that~$\norm{z}{H}\le\norm{z}{V}$ and that,
by Assumption~\ref{A:fgV} we have~$\langle F(\zeta),1\rangle$ as in~\eqref{fkW1} is nongegative. These facts, together with estimate~\eqref{bddGal} and~\eqref{weakbdd-GalN} lead to
\begin{align}
\tfrac12\dnorm{\fkp^N}{V_0}^2+\dnorm{w^N_2}{V_0}^2+\tfrac{\rmd}{\rmd t}\fkW(w)&\le C_{17}(1+\norm{w_1}{V}^2)\fkW(w)+C_{17}(1+\norm{  h}{V'\times V'}^2).\notag
\end{align}
In particular, by the Gronwall inequality, \eqref{weakbdd-GalN}, and time integration, it follows that
\begin{align}\label{strgbdd-GalNV}
\norm{w^N}{L^\infty((0,T),V)}\le C_{18}\quad\mbox{and}\quad \int_0^T\dnorm{\fkp^N(t)}{V_0}^2\,\rmd t \le C_{18},
\end{align}
with~$C_{18}$ independent of~$N$.

Testing the dynamics of~$w^N_1$ with~$2Aw^N_1$, with~$R_{fg}\coloneqq P_{\clE_N}\Delta f(w_{1}^N)-P_{\clE_N}g(w_{1}^N)$, we find
 \begin{align}  
 \tfrac{\rmd}{\rmd t}\norm{w_{1}^N}{V}^2 &=2(- \nu_2 A_0^2 w_{1}^N - \nu_0\nu_1A_0 w_{1}^N+ \nu_1A_0 w_{2}^N +R_{fg}+P_{\clE_N}\widehat h_1(w_1^N),Aw^N_1)_H\notag
 \end{align}
 with
  \begin{align}  
2(- \nu_2 A_0^2 w_{1}^N,Aw^N_1)_H &=-2\nu_2\norm{w_{1}^N}{\rmD(A^\frac32)}^2 +2\nu_2((A^2-A_0^2)w_{1}^N,Aw^N_1)_H\notag\\
&=-2\nu_2\norm{w_{1}^N}{\rmD(A^\frac32)}^2 +2\nu_2((2A_0+\Id)w_{1}^N,Aw^N_1)_H\notag\\
&=-2\nu_2\norm{w_{1}^N}{\rmD(A^\frac32)}^2 +2(2\nu_2A_0w_{1}^N,Aw^N_1)_H+2\nu_2\norm{w_{1}^N}{V}^2.\notag
\end{align}
Hence
 \begin{align}  
 \tfrac{\rmd}{\rmd t}\norm{w_{1}^N}{V}^2 &=-2\nu_2\norm{w_{1}^N}{\rmD(A^\frac32)}^2+2\nu_2\norm{w_{1}^N}{V}^2+2((2\nu_2- \nu_0\nu_1)A_0 w_{1}^N+ \nu_1A_0 w_{2}^N,Aw^N_1)_H \notag\\&\quad+2(R_{fg}+P_{\clE_N}\widehat h_1(w_1^N),Aw^N_1)_H\notag
 \end{align}
 and, using again~$A_0=A-\Id$ and the Young inequality,
 \begin{align}  
  \tfrac{\rmd}{\rmd t}\norm{w_{1}^N}{V}^2&\le-2\nu_2\norm{w_{1}^N}{\rmD(A^\frac32)}^2+C_{20}\norm{w_{1}^N}{V}^2+C_{20}\norm{w_{2}^N}{V}^2\notag\\
 &\quad+2C_{20}(\norm{w_{1}^N}{V}+\norm{w_{2}^N}{V})\norm{w^N_1}{\rmD(A^\frac32)} +2(R_{fg}+P_{\clE_N}\widehat h_1(w_1^N),Aw^N_1)_H\notag\\
 &\le-(2\nu_2-4\gamma)\norm{w_{1}^N}{\rmD(A^\frac32)}^2+C_{20}\norm{w_{1}^N}{V}^2+C_{20}\norm{w_{2}^N}{V}^2\notag\\
 &\quad+C_{20}^2\gamma^{-1}(\norm{w_{1}^N}{V}^2+\norm{w_{2}^N}{V}^2) +\gamma^{-1}(\norm{R_{fg}}{V'}^2+\norm{P_{\clE_N}\widehat h_1(w_1^N)}{V'}^2)\notag
 \end{align}
for all~$\gamma>0$. Choosing~$\gamma=\frac{\nu_2}4$ and recalling~\eqref{hath},  we obtain
 \begin{align}  
  \tfrac{\rmd}{\rmd t}\norm{w_{1}^N}{V}^2
 &\le-\nu_2\norm{w_{1}^N}{\rmD(A^\frac32)}^2+C_{21}\norm{w_{1}^N}{V}^2+C_{21}\norm{w_{2}^N}{V}^2\notag\\
 &\quad+C_{21}(\norm{\Delta f(w_{1}^N)-g(w_{1}^N)}{V'}^2+\norm{h_1}{V'}^2+\chi\norm{w_{1}^N}{H}^2),\notag
 \end{align}
 with~$\chi\in\{0,1\}$. Now, we use Assumption~\ref{A:fgV}, together with  Young and interpolation inequalities, to obtain
  \begin{align}  
  \nu_2\norm{w_{1}^N}{\rmD(A^\frac32)}^2+\tfrac{\rmd}{\rmd t}\norm{w_{1}^N}{V}^2
 &\le C_{22}\norm{w_{1}^N}{V}^2+C_{21}\norm{w_{2}^N}{V}^2+C_{22}\norm{w_{1}^N}{\rmD(A)}^2+C_{21}\norm{h_1}{V'}^2\notag
 \end{align}
 where we used again~$\norm{w_{1}^N}{H}\le\norm{w_{1}^N}{V}$. By~\eqref{weakbdd-GalN}, \eqref{strgbdd-GalNV}, and time integration, we find that
 \begin{align}\label{strgbdd-GalNDA}
\norm{w^N}{L^2((0,T),\rmD(A^\frac32))}\le C_{23}
\end{align}
 with~$C_{23}$ independent of~$N$.
Next, by the dynamics in~\eqref{sys-w-GalN}, Assumption~\ref{A:fgV} and~\eqref{hath},
  \begin{align}  
 \norm{\dot w^N_{1}}{V'} &\le C_{23}\norm{ w^N_{1} }{\rmD(A^\frac32)}+C_{23}\norm{ w^N_{2} }{V} + \norm{\Delta f(w^N_{1})-g(w^N_{1})+\widehat h_1(w^N_{1})}{V'}\notag\\
 &\le C_{23}\norm{ w^N_{1} }{\rmD(A^\frac32)}+C_{24}\norm{ w^N_{2} }{V} + C_{24}\norm{w^N_{1}}{\rmD(A)}+ C_{24}\norm{h_1}{V'},\notag\\
  \norm{\dot w^N_{2}}{V'}  &\le C_{25}\norm{ w^N_{2} }{V}+C_{25}\norm{ w^N_{1} }{V}+ \norm{\widehat h_2(w^N_2)}{V'}\le C_{26}\norm{ w^N_{2} }{V}+C_{25}\norm{ w^N_{1} }{V}+ C_{26}\norm{h_2}{V'}.\notag
\end{align}

By~\eqref{weakbdd-GalN}, \eqref{strgbdd-GalNV}, and~\eqref{strgbdd-GalNDA}, it follows that
 \begin{align}\notag
\norm{\dot w^N}{L^2((0,T),V'\times V')}\le C_{27},
\end{align}
 with~$C_{27}$ independent of~$N$. In conclusion, we have that
  \begin{align}\label{strgbdd-GalN-Wspace}
\norm{w^N}{W((0,T),\rmD(A^\frac32)\times V,V'\times V')}\le C_{28},
\end{align}
  with~$C_{28}$ independent of~$N$

%%%%%%%%%%%%%%%%%%%%%%%%%%%
%%%%%%%%%%%%%%%%%%%%%%%%%%%
\subsection{Existence}\label{sS:exist}
We show that solutions~$w$ for~\eqref{sys-w} exist in~$W_{\rm loc}(\bbR_+,\rmD(A^\frac32)\times V,V'\times V')\eqqcolon\bfW$, for initial states~$z(0)\in V\times H$. Note that, this implies the existence of solutions~$y\coloneqq\Xi^{-1} w$ for~\eqref{sys-y-htilde} in the same space~$\bfW$,  with~$\Xi$ defined  in~\eqref{chvar-y-to-z}. As usual, we use the uniform bound~\eqref{strgbdd-GalN-Wspace} for the sequence of Galerkin approximations~$(w^N)_{N\in\bbN_+}$, and take a weak limit~$w^\infty\in \bfW$ of a susequence of~$(w^N)_{N\in\bbN_+}$ (which we still denote by~$(w^N)_{N\in\bbN_+}$, for simplicity)
  \begin{align} \label{weak-lim}
 w^N \xrightharpoonup[\bfW]{}w^\infty.
  \end{align}
To show that~$w^\infty$ solves~\eqref{sys-w}, it is enough to show that the terms in the dynamics equation converge weakly in~$L^2((0,T), V')$. For the affine terms (recall~\eqref{hath}) we have
  \begin{align}  
 &-\dot w^N_{1} - \nu_2A_0^2 w^N_{1} - \nu_0\nu_1A_0 w^N_{1}+ \nu_1A_0 w^N_{2} + P_{\clE_N}\widehat h_1(w^N_{1})\notag\\&\hspace{5em}\xrightharpoonup[L^2((0,T), V')]{}-\dot w_{1}^\infty - \nu_2A_0^2 w_{1}^\infty - \nu_0\nu_1A_0 w_{1}^\infty+ \nu_1A_0 w_{2}^\infty + \widehat h_1(w_{1}^\infty),\notag\\
  &-\dot w^N_{2} -A_0 w^N_{2}+\nu_0A_0w^N_{1}+P_{\clE_N}\widehat h_2(w^N_2)\notag\\&\hspace{5em}\xrightharpoonup[L^2((0,T), V')]{}-\dot w_{2} -A_0 w_{2}^\infty+\nu_0A_0w_{1}^\infty+\widehat h_2(w_2^\infty),\notag
\end{align}
Therefore, it remains to show that the nonlinear terms also converge weakly
  \begin{align}  
 & P_{\clE_N}\Delta f(w^N_{1})-P_{\clE_N}g(w^N_{1})\xrightharpoonup[L^2((0,T), V')]{}\Delta f(w_{1}^\infty)-g(w_{1}^\infty).\label{goal-convN}
\end{align}
For this purpose, we denote~$V_T\coloneqq L^2((0,T), V)$,   and observe that for any~$\varphi\in V_T$,
  \begin{align}  
& \langle P_{\clE_N}A_0 f(w^N_{1})-A_0  f(w^\infty_{1}),\varphi\rangle_{V_T', V_T}\notag\\
 &\hspace{1em}=\langle P_{\clE_N}(A_0  f(w^N_{1})-A_0  f(w^\infty_{1})),\varphi\rangle_{ V_T',V_T}+\langle P_{\clE_N}A_0 f(w^\infty_{1})-A_0  f(w^\infty_{1}),\varphi\rangle_{V_T', V_T}\notag\\
  &\hspace{1em}\le\norm{A_0(f(w^N_{1})- f(w^\infty_{1}))}{V_T'}\norm{\varphi}{V_T}+\norm{A_0 f(w^\infty_{1})}{V_T'}\norm{(1-P_{\clE_N})\varphi}{V_T};\notag
 \intertext{and}
 & \langle P_{\clE_N}g(w^N_{1})-g(w^\infty_{1}),\varphi\rangle_{V_T', V_T}\notag\\
 &\hspace{1em}=\langle P_{\clE_N}(g(w^N_{1})-g(w^\infty_{1})),\varphi\rangle_{V_T', V_T}+\langle P_{\clE_N}g(w^\infty_{1})-g(w^\infty_{1}),\varphi\rangle_{V_T', V_T}\notag\\
   &\hspace{1em}\le\norm{g(w^N_{1})- g(w^\infty_{1})}{V_T'}\norm{\varphi}{V_T}+\norm{g(w^\infty_{1})}{V_T'}\norm{(1-P_{\clE_N})\varphi}{V_T};\notag
\end{align}
 which, together with Assumption~\ref{A:N-dif}, lead to
   \begin{align}  
\Theta\coloneqq&\lim_{N\to+\infty} \left(\langle P_{\clE_N}A_0 f(w^N_{1})-A_0  f(w^\infty_{1}),\varphi\rangle_{V_T', V_T} +\langle P_{\clE_N}g(w^N_{1})-g(w^\infty_{1}),\varphi\rangle_{V_T', V_T}\right)\notag\\
&\hspace{3em}\le\lim_{N\to+\infty}\left(\norm{A_0(f(w^N_{1})- f(w^\infty_{1}))}{V_T'}\norm{\varphi}{V_T}+\norm{g(w^N_{1})- g(w^\infty_{1})}{V_T'}\norm{\varphi}{V_T}\right)\notag\\
&\hspace{3em}\le C_1\norm{\varphi}{V_T}\lim_{N\to+\infty}
\norm{\Bigl(1+\norm{(w^N_{1},w^\infty_{1})}{\rmD(A)\times\rmD(A)}\Bigr)
\norm{w^N_{1}- w^\infty_{1}}{V}}{L^2((0,T), \bbR)}.\notag
\end{align}
By interpolation, \eqref{strgbdd-GalNV}, \eqref{strgbdd-GalN-Wspace}, and H\"older inequality,
   \begin{align}  
\Theta&\le C_1\norm{\varphi}{V_T}\lim_{N\to+\infty}\norm{w^N_{1}- w^\infty_{1}}{L^2((0,T),V)}\notag\\
&\quad+C_2\norm{\varphi}{V_T}\norm{\norm{(w^N_{1},w^\infty_{1})}{\rmD(A^{\frac32})\times\rmD(A^{\frac32})}\norm{w^N_{1}- w^\infty_{1}}{V}}{L^1((0,T), \bbR)}\notag\\
&\le C_3\norm{\varphi}{V_T}\lim_{N\to+\infty}\norm{w^N_{1}- w^\infty_{1}}{L^2((0,T), V)}.\notag
\end{align}
By~\eqref{weak-lim} and~$\bfW\xhookrightarrow{\rmc} L^2((0,T), V)$ (e.g., see~\cite[Ch.~3, Sect.~2.2, Thm.~2.1]{Temam01}), we can assume that~$w^N_{1} \to w^\infty_{1}$ strongly in~$L^2((0,T), V)$; by taking a subsequence, if necessary. Thus, we arrive at~\eqref{goal-convN}.

%%%%%%%%%%%%%%%%%%%%%%%%%%%
%%%%%%%%%%%%%%%%%%%%%%%%%%%
\subsection{Uniqueness}\label{sS:unique}
We show that solutions~$w$  for~\eqref{sys-w} in~$W_{\rm loc}(\bbR_+,\rmD(A^\frac32)\times V,V'\times V')=\bfW$ are unique, which implies the uniqueness of solutions~$y\coloneqq\Xi^{-1} w$ for~\eqref{sys-y-htilde} in~$\bfW$.

We know that a solution~$w=w^\infty\in\bfW$ does exist. Let~$\widetilde w\in \bfW$ be another solution and fix an arbitrary~$T>0$. The difference~$d\coloneqq \widetilde w-w$ solves, for~$t\in(0,T)$, the analogue of~\eqref{sys-z-evol},
\begin{align}\label{sys-d}
\dot d&=-\bfA d-\bfA_{\rm rc} d-\clN(\widetilde w_1)+\clN(w_1)+\widetilde h_1^{\widetilde w}-\widetilde h_1^w,\qquad d(0)= 0,
\end{align}
with~$\clN(z)\coloneqq A_0f(z_1)+g(z_1)$.
By Assumption~\ref{A:N-dif} and~$W((0,T),\rmD(A^\frac32),V')\xhookrightarrow{}\clC([0,T],V)$, it follows that, for some constant~$C_0>0$,
\begin{align}
-2\langle\clN(\widetilde w_1)-\clN(w_1), d\rangle_{\rmD(A)',\rmD(A)}&\le C_0(1+\norm{(\widetilde w_1,w_1)}{\rmD(A)\times\rmD(A)})\norm{d_1}{V}^2.\notag
\end{align}
Then, multiplying~\eqref{sys-d} by~$2d$, and using~$\rmD(A^\frac32)\xhookrightarrow{}\rmD(A)$,
\begin{align}
\tfrac{\rmd}{\rmd t}\norm{d}{H\times H}^2&\le-2\nu_2\norm{d_1}{\rmD(A)}^2-2\norm{d_2}{V}^2-2(\bfA_{\rm rc} d+\widetilde h_1^{\widetilde w}-\widetilde h_1^w,d)_{H\times H}\notag\\
&\quad+ C_0(1+\clQ)\norm{d_1}{V}^2,\quad\mbox{with }\clQ\coloneqq\norm{(\widetilde w_1,w_1)}{\rmD(A^\frac32)\times\rmD(A^\frac32)}.\notag
\end{align}
Recalling the definition of~$\bfA_{\rm rc}$ in ~\eqref{sys-z-evol}, it follows that
\begin{align}
\tfrac{\rmd}{\rmd t}\norm{d}{H\times H}^2&\le-2\nu_2\norm{d_1}{\rmD(A)}^2-2\norm{d_2}{V}^2+C_1\norm{d_1}{V}^2+C_0\clQ\norm{d_1}{V}^2\notag\\
&\quad+C_2\dnorm{d_1}{V_0}\dnorm{d_2}{V_0}+2\norm{d_2}{H}-2(\zeta,d)_{H\times H},\notag
\end{align}
with~$(\zeta_1,\zeta_2)=\zeta\coloneqq\widetilde h_1^{\widetilde w}-\widetilde h_1^w$, for which we find recalling~\eqref{OP-FeedK},
\begin{align}
&\zeta_1=(h_1+\bfF_1(\widetilde w_1-w_{r1}))-(h_1+\bfF_1(w_1-w_{r1})=\bfF_1(d_1),\notag\\
&\zeta_2=(h_2+\bfF_2(\widetilde w_2-w_{r2}))-(h_2+\bfF_2(w_2-w_{r2}))=\bfF_2(d_2).\notag
\end{align}
By~\eqref{Kmonotone}, we find~$-2(\zeta,d)_{H\times H}=-2(\bfF_1(d_1),d_1)_{H}-2(\bfF_2(d_2),d_2)_{H}\le0$ and by Young and interpolation inequalities
\begin{align}
\tfrac{\rmd}{\rmd t}\norm{d}{H\times H}^2&\le-2\nu_2\norm{d_1}{\rmD(A)}^2-\norm{d_2}{V}^2+C_3\norm{d_1}{V}^2+2\norm{d_2}{H}^2+C_0\clQ\norm{d_1}{V}^2.\notag
\end{align}
Since~$\clQ$ is integrable in the time interval~$(0,T)$ and~$d(0)=0$, by  the Gronwall inequality it follows that~$d(t)=0$, $t\in(0,T)$.

%%%%%%%%%%%%%%%%%%%%%%%%%%%%
%%%%%%%%%%%%%%%%%%%%%%%%%%%%
%%%%%%%%%%%%%%%%%%%%%%%%%%%%
\section{On the assumptions on the nonlinearity}\label{S:assumok}

We show that Assumptions~\ref{A:NfNg1}, \ref{A:fgV}, and~\ref{A:N-dif} are satisfied by the double-well polynomial potential~$F$ in~\eqref{polyFtau} and by some nonlinearities~$g$.

%%%%%%%%%%%%%%%%%%%%%%%%%%%%
%%%%%%%%%%%%%%%%%%%%%%%%%%%%
\subsection{The double-well potential}
We start by checking Assumption~\ref{A:fgV}. By~\eqref{polyFtau} we see that~$F$ maps continuous functions to continuous functions. In particular, it maps~$\rmD(A)\subseteq\clC(\overline\Omega)$ into~$L^\infty\subset L^1$; recall that~$\Omega\subset\bbR^d$ is bounded and~$d\in\{1,2,3\}$.

By~\eqref{polyFtau} and the Sobolev embedding~$V\xhookrightarrow{}L^6$, we  find
\begin{align}
\norm{f(v)}{V'}^2&\le C_5\norm{f(v)}{L^\frac65}^2=16\tau^2C_5\norm{v^3-v}{L^\frac65}^2
=32\tau^2C_5\left(\norm{v^3}{L^\frac65}^2+\norm{v}{L^\frac65}^2\right)\notag\\
&\le\tau^2C_5\norm{v}{L^\frac{18}{5}}^{\frac{10}{6}\frac{18}{5}}+\tau^2C_6\norm{v}{L^2}^2
=\tau^2C_7\norm{v}{W^{\frac23,2}}^{6}+\tau^2C_6\norm{v}{H}^2,
\notag
\end{align}
where we used the Sobolev embedding~$W^{\frac23,2}\xhookrightarrow{}L^\frac{18}{5}$. Now, an interpolation argument gives
\begin{align}
\norm{f(v)}{V'}^2&\le\tau^2C_8\norm{v}{H}^{2}\norm{v}{V}^4+\tau^2C_6\norm{v}{H}^2.
\label{chk-assumf1}
\end{align}

Further, under either of the boundary conditions in~\eqref{bcs},  we have
\begin{align}
-\langle A_0 f(v),v\rangle_{\rmD(A)',\rmD(A)}&=-4\tau(\nabla (v^3-v),\nabla v)_{(L^2)^d}=4\tau\int_\Omega (1-3v^2)\norm{\nabla v}{\bbR^d}^2\,\rmd\Omega\notag\\
&\le4\tau\norm{\nabla v}{  (L^2)^d}^2.   \label{chk-assumf2}
\end{align}
By~\eqref{chk-assumf1} and~\eqref{chk-assumf2}, it follows that the nonlinearity in~\eqref{polyFtau} satisfies the requirements in Assumption~\ref{A:fgV}. 
Next we check Assumptions~\ref{A:NfNg1} and~\ref{A:N-dif}.
We find
\begin{equation}\notag
f(v)-f(w)\coloneqq4\tau(d^3+3wd^2+3w^2d-d),
\end{equation}
with~$d\coloneqq v-w$. Here, we make the following assumption on the target trajectory.
\begin{assumption}\label{A:yr-bdd}
We have that~$\norm{y_{1\ttr}}{L^\infty(\bbR_+\times\Omega)}+\norm{\nabla y_{1\ttr}}{L^\infty(\bbR_+\times\Omega)^d}\le C_\ttr<+\infty$.
\end{assumption}

Again, under either of the boundary conditions in~\eqref{bcs}, for~$N_f$ as in~\eqref{sys-z} we obtain
\begin{align}
\clZ&\coloneqq-\langle A_0 N_f(t,v),v\rangle_{\rmD(A)',\rmD(A)}\notag\\
&=-4\tau\int_\Omega(3v^2+6y_{1\ttr}v+3y_{1\ttr}^2-1)\norm{\nabla v}{\bbR^d}^2\,\rmd \Omega -4\tau\int_\Omega(3v^2+6y_{1\ttr}v)(\nabla y_{1\ttr} ,\nabla v)_{\bbR^d}\,\rmd \Omega\notag\\
&=4\tau\int_\Omega(1-3(v+y_{1\ttr})^2)\norm{\nabla v}{\bbR^d}^2\,\rmd\Omega-4\tau\int_\Omega(3v^2+6y_{1\ttr}v)(\nabla y_{1\ttr} ,\nabla v)_{\bbR^d}\,\rmd \Omega\notag\\
&\le4\tau\norm{v}{V_0}^2+4\tau\int_\Omega(3C_\ttr v^2+6C_\ttr^2v)\norm{\nabla v}{\bbR^d}\,\rmd \Omega,\notag
\end{align}
with~$C_\ttr$ as in Assumption~\ref{A:yr-bdd}. Hence,
\begin{align}
\clZ&\le4\tau\norm{v}{V_0}^2+12\tau C_\ttr \norm{v}{L^4}^2\norm{v}{V_0}+24\tau C_\ttr^2 \norm{v}{L^2}\norm{v}{V_0}\notag\\
&\le4\tau\norm{v}{V_0}^2+\tau C_\ttr C_0 \norm{v}{W^{\frac34,2}}^{2}\norm{v}{V_0}+24\tau C_\ttr^2 \norm{v}{H}\norm{v}{V_0},\notag
\end{align}
where we have used~$W^{\frac34,2}(\Omega)\xhookrightarrow{}L^4(\Omega)$. Now, by interpolation inequalities, we find
\begin{align}
\clZ&\le\tau C_1\norm{v}{H}\norm{v}{\rmD(A)}+\tau C_\ttr C_1 \norm{v}{H}^{\frac12}\norm{v}{V}^{\frac52}+\tau C_\ttr^2C_1 \norm{v}{H}\norm{v}{\rmD(A)}\notag\\
&\le\tau C_1\norm{v}{H}\norm{v}{\rmD(A)}+\tau C_\ttr C_2 \norm{v}{H}^{\frac74}\norm{v}{\rmD(A)}^{\frac54}+\tau C_\ttr^2C_1 \norm{v}{H}\norm{v}{\rmD(A)}\notag\\
&\le\tau C_3(1+C_\ttr^2)\left(\norm{v}{H}\norm{v}{\rmD(A)}+ \norm{v}{H}^{\frac74}\norm{v}{\rmD(A)}^{\frac54}\right).\notag
\end{align}

Hence, $N_f$ satisfies the requirements in Assumption~\ref{A:NfNg1}. Next, with~$d\coloneqq v-w$, we find
\begin{align}
&-\langle A_0 (f(v)-f(w)),z\rangle_{V',V}\notag\\
&\qquad=-4\tau\int_\Omega(3d^2+6wd+3w^2-1) (\nabla d,\nabla z)_{\bbR^d}\,\rmd \Omega-4\tau\int_\Omega(3d^2+6wd)(\nabla w ,\nabla z)_{\bbR^d}\,\rmd \Omega,\notag
\end{align}
and, by H\"older and Sobolev inequalities (recall that~$d\in\{1,2,3\}$)
\begin{align}
\norm{A_0 (f(v)-f(w))}{V'}
&\le 4\tau\norm{(1-3(d+w)^2)\nabla d+(3d^2+6wd)\nabla w}{(L^2)^d}\notag\\
&\le 4\tau\norm{1-3v^2}{L^\infty }\norm{d}{V}+12\tau\norm{d^2+2wd}{L^3}\norm{\nabla w}{(L^6)^d} \notag\\
&\le 4\tau\norm{1-3v^2}{L^\infty }\norm{d}{V}+\tau C_4(\norm{d}{L^6}^2+\norm{d}{L^6}\norm{w}{L^6})\norm{w}{\rmD(A)} \notag\\
&\le 4\tau\norm{1-3v^2}{L^\infty }\norm{d}{V}+\tau C_5(\norm{d}{V}+\norm{w}{L^6})\norm{w}{\rmD(A)}\norm{d}{V}, \notag
\end{align}
and, using also the Agmon inequaly ($d\in\{1,2,3\}$)
\begin{align}
\norm{A_0 (f(v)-f(w))}{V'}&\le \tau C_6(1+\norm{v}{V}\norm{v}{\rmD(A)}+\norm{d}{V}\norm{w}{\rmD(A)}+\norm{w}{V}\norm{w}{\rmD(A)})\norm{d}{V} \notag\\
&\le\tau C_7(1+\norm{v}{V}+\norm{w}{V})(1+\norm{v}{\rmD(A)}+\norm{w}{\rmD(A)})\norm{d}{V} \notag\\
&\le\tau C_8(1+\norm{(v,w)}{V\times V})(1+\norm{(v,w)}{\rmD(A)\times\rmD(A)})\norm{d}{V}. \notag
\end{align}
In particular, ~$f$ satisfies the inequality in Assumption~\ref{A:N-dif}.
\begin{remark}\label{R:equil}
We would like to mention that Assumption~\ref{A:yr-bdd} is nonempty. For example, it is satisfied by equilibria. In fact, from the results of the existence and uniqueness in Sections~\ref{sS:exist} and~\ref{sS:unique} we can conclude that every equilibrium~$(\varphi,\phi)\in V\times H$ is necessarily in~$\rmD(A^\frac32)\times V$. The component~$y_{\ttr1}(t)=\varphi$ satisfies the requirements in Assumption~\ref{A:yr-bdd}, due to~$\rmD(A^\frac32)\subset W^{3,2}(\Omega)$, to the Agmon inequality, and to~$d\in\{1,2,3\}$.
\end{remark}

%%%%%%%%%%%%%%%%%%%%%%%%%%%%
%%%%%%%%%%%%%%%%%%%%%%%%%%%%
\subsection{The nonlinearity component~$g$}
We consider examples of a nonlinearities~$g=g(z_1)$ satisfying the requirements in Assumptions~\ref{A:NfNg1}, \ref{A:fgV}, \ref{A:N-dif}. That is, we consider the family
\begin{equation}\label{g-class}
g(w)\coloneqq a_0+a_1 w+a_2w(\norm{w}{\bbR}-1);
\end{equation}
defined by three given scalar functions~$a_0\in L^2(\Omega)$; ~$a_1\in L^\infty(\Omega)$ and~$a_1>0$; $a_2\in L^\infty(\Omega)$ and~$a_2>0$. This class of nonlinearities is motivated by examples given in~\cite{Miranville19}. Indeed, the example in~\cite[Intro., exa.~(i)]{Miranville19} concerns the Cahn--Hilliard--Oono equation with~$g(w)=a_0+a_1 w$ with constants~$a_0$ and~$a_1>0$; in~\cite[Intro., exa.~(ii)]{Miranville19} it is considered the more general ``fidelity term''~$g(w)=a_0+a_1 w$ where~$a_0\in L^2(\Omega)$ and~$0<a_1\in L^\infty(\Omega)$, motivated by applications to binary image inpainting; finally, \cite[Intro., exa.~(iii)]{Miranville19} considers the ``proliferation term''~$g(w)=a_2w(w-1)$ with constant~$a_2>0$, motivated by applications in biology; above we need a variation of this term because our Assumption~\ref{A:fgV} implicitly requires a monotonicity property (away from~$0$).

To show that our results cover nonlinearities~$g$ as in~\eqref{g-class}, we start by observing that the term~$a_0$ can be included in the external forcing, that is, by taking~$h_1+a_0$ instead of~$h_1$ in~\eqref{sys-CH}. Then it is enough that, the assumptions are satisfied by
\begin{equation}\label{barg-class}
\overline g(w)\coloneqq a_1 w+a_2w(\norm{w}{\bbR}-1).
\end{equation}
We find, that for a suitable constant~$C_1\ge 0$,
\begin{align}
\langle\overline g(w),v\rangle_{V,V'}&\le C_1\left(\norm{w}{V'}+\norm{w^2}{V'}\right)\norm{v}{V}\le C_2\left(\norm{w}{H}+\norm{w}{L^4}^2\right)\norm{v}{V}\notag\\
&\le C_3\left(\norm{w}{H}+\norm{w}{H}^\frac12\norm{w}{V}^\frac32\right)\norm{v}{V}\le C_4\norm{w}{V}\left(1+\norm{w}{H}^\frac12\norm{w}{V}^\frac12\right)\norm{v}{V}\notag\\
&\le C_5\norm{w}{V}\left(1+\norm{w}{H}\norm{w}{V}\right)\norm{v}{V}.\notag
\end{align}
and~$-\langle\overline g(w),w\rangle_{\rmD(A),\rmD(A)'}\le\norm{a_2}{L^\infty}\norm{w}{H}^2$. 
Hence, Assumption~\ref{A:fgV} is satisfied by~$\overline g$.    

Next we observe that, with~$d=w-v$,
\begin{align}
\overline g(w)-\overline g(v)&=  (a_1-a_2) d+a_2d\norm{w}{\bbR}+a_2v(\norm{w}{\bbR}-\norm{v}{\bbR}).\label{g-aux-ex}\\
\norm{\overline g(w)-\overline g(v)}{V'}&\le C_6\norm{d}{H}\left(1+ \norm{w}{L^\infty}+\norm{v}{L^\infty}\right)\le C_7\left(1+ \norm{(w,v)}{\rmD(A)\times\rmD(A)}\right)\norm{d}{V},\notag
\end{align}
which gives us that Assumption~\ref{A:N-dif} is satisfied by~$\overline g$.   Finally, replacing~ $v$ and $w$ in~\eqref{g-aux-ex}, with the reference trajectory component~$y_{\ttr1}$  and $y_{\ttr1} +d$,  respectively,  we find
\begin{align}
-\langle N_g(d), d\rangle_{V',V}&=-\langle \overline g(y_{\ttr1}+d)-\overline g(y_{\ttr1}), d\rangle_{V',V}\notag\\
&= -\left( a_1-a_2+a_2\norm{y_{\ttr1}+d}{\bbR}, d^2\right)_H-\left(a_2y_{\ttr1}(\norm{y_{\ttr1}+d}{\bbR}-\norm{y_{\ttr1}}{\bbR}),d\right)_H\notag\\
&\le\norm{a_2}{L^\infty}\norm{d}{H}^2+\norm{a_2y_{\ttr1}}{L^\infty}\norm{d}{H}^2.\notag
\end{align}
Hence, Assumption~\ref{A:NfNg1} is satisfied by~$\overline g$, for essentially bounded target trajectories~$y_{\ttr1}$, in particular, for target trajectories as in Assumption~\ref{A:yr-bdd}.

%%%%%%%%%%%%%%%%%%%%%%%%%%%%
%%%%%%%%%%%%%%%%%%%%%%%%%%%%
%%%%%%%%%%%%%%%%%%%%%%%%%%%%
\section{Numerical Discretization and Simulations}\label{S:simul}

In this section, we present numerical illustrations to validate  our theoretical findings. The numerical results discussed herein have been obtained by implementing the model using the finite element library FEniCS \cite{fenics}.

\subsection{Discretization}

We aim to investigate the coupled system \eqref{sys-z} in terms of both the free dynamic solution~$(z_{1\ttr}, z_{2\ttr}, \varpi_\ttr)$ and the controlled dynamics solution~$(z_{1\ttc}, z_{2\ttc}, \varpi_\ttc)$. Here,~$\varpi_\ttr$ and~$\varpi_\ttc$ denote the chemical potentials of the respective phase-field variable. All variables are equipped with homogeneous Neumann boundary conditions. Quadrilateral elements are employed to approximate the solutions. In this section we focus on the nonlinearity~$f$ and consider the case~$g=0$ only. The variational system is discretized in time using a semi-implicit Euler method, implementing the classical convex-concave splitting scheme for the nonlinear potential function~$f$. Specifically, for the double-well potential~$\frac14\tau(1-y^2)^2$, we define~$f(y)=\tau y(y^2-1)$, where~$f_e(y) = \tau y(y^2-3)$ and~$f_c(y) = 2\tau y$. This approach, treating the expansive part explicitly and the contractive part implicitly, renders the Cahn--Hilliard equation unconditionally stable \cite{elliott1993global}. 

The oblique projection is handled implicitly/explicitly. We follow the discretization approach in~\cite{RodSturm20} for the oblique projections, and present a discretization for the feedback operators. We start with the feedback operator in the Cahn--Hilliard equation, and utilize the following expression \cite[Eq. (3.2)]{RodSturm20} (see also~\cite[Eq. (2.3a)]{KunRod19-cocv}):
\[ P_{\widetilde\clU_{M}}^{\clU_{M}^\perp} y(x) = \sum_{i=1}^{M_\sigma} \alpha_j \widetilde\Phi_j^M(x), \quad \text{with} \quad  \alpha \coloneqq [(U_M,\widetilde U_M)_H]^{-1}[(U_M,y)_H] \in \mathbb{R}^{M_\sigma}. \]

For the convenience of the reader, we provide a brief overview of the discretization approach. We define the actuators matrix~$[U_M]= \left[\bm\Phi_1^M~\cdots~\bm\Phi_{M_\sigma}^M\right]\in \mathbb R^{N \times M_\sigma}$, where the columns are the vectors of the evaluations of the indicator-function actuators  at the~$N$ mesh points in the domain~$\Omega$. Similarly, we denote the auxiliary functions matrix~$[\widetilde U_M]= \left[\widetilde{\bm\Phi}_1^M~\cdots~\widetilde{\bm\Phi}_{M_\sigma}^M\right]\in \mathbb R^{N \times M_\sigma}$. In addition, we consider the mass matrix~$\bm{M}\in \mathbb{R}^{N \times N}$ and the mesh-evaluated function~$\bm y=[y] \in  \mathbb{R}^{N\times1}$. Then, we obtain theapproximations
\[
\alpha \approx[\bm{\Phi}^\top \bm M \bm{\widetilde\Phi}]^{-1} [U_M]^\top \bm M \bm y\quad\mbox{and}\quad P_{\widetilde\clU_{M}}^{\clU_{M}^\perp} y \approx[\widetilde U_M] [\bm{\Phi}^\top \bm M \bm{\widetilde\Phi}]^{-1} [U_M]^\top \bm M \bm y.
\]
Similarly we obtain the approximation
\[
 P_{\clU_{M}}^{\widetilde\clU_{M}^\perp} y \approx[U_M] [\widetilde{\bm\Phi}^\top \bm M \bm{\Phi}]^{-1} [\widetilde U_M]^\top \bm M \bm y.
\]
Denoting, for simplicity,~$\bfQ_1\coloneqq[{\bm\Phi}^\top \bm M \widetilde{\bm\Phi}]^{-1}$, we find
\[
(P_{\widetilde\clU_{M}}^{\clU_{M}^\perp} y, \xi)_H \approx\bm\xi^\top \bm M [\widetilde U_M] \bm Q_1 \bm [ U_M]^\top \bm M \bm y\quad\mbox{and}\quad
(P_{\clU_{M}}^{\widetilde\clU_{M}^\perp} y, \xi)_H \approx\bm\xi^\top \bm M [U_M]  \bm Q_1^\top[\widetilde U_M]^\top \bm M \bm y.
\]
which in particular agrees with the fact that~$P_{\widetilde\clU_{M}}^{\clU_{M}^\perp}=(P_{\clU_{M}}^{\widetilde\clU_{M}^\perp} )^*$, namely,
 \begin{align}
\bm M [\widetilde U_M] \bm Q_1 \bm [ U_M]^\top \bm M =\left(\bm M [U_M]  \bm Q_1^\top[\widetilde U_M]^\top \bm M\right)^\perp.
\end{align}
Next, we obtain the discretization of the feedback operator~$\bfF_1$ in~\eqref{Kmonotone} as follows. First of all, for~$A$ as in Section~\ref{sS:fun-sett}, we find~$A_D=\bm S+\bm M$ where~$\bm S$ is the stiffness matrix, then we observe that
 \begin{align}
(\bfF_1 y, \xi)_H &=-\lambda_1(AP_{\widetilde\clU_{M}}^{\clU_{M}^\perp} y, AP_{\widetilde\clU_{M}}^{\clU_{M}^\perp} \xi)_H \approx-\lambda_1[AP_{\widetilde\clU_{M}}^{\clU_{M}^\perp} \xi]^\perp\bm M [AP_{\widetilde\clU_{M}}^{\clU_{M}^\perp} y]\notag\\
&=-\lambda_1\left(\bm M^{-1}\bm S [ P_{\widetilde\clU_{M}}^{\clU_{M}^\perp} \xi]\right)^\perp\bm M \left(\bm M^{-1}\bfS [P_{\widetilde\clU_{M}}^{\clU_{M}^\perp} y]\right)\notag\\
&=-\lambda_1 [ P_{\widetilde\clU_{M}}^{\clU_{M}^\perp} \xi]^\perp \bm S\bm M^{-1}\bm S [P_{\widetilde\clU_{M}}^{\clU_{M}^\perp} y].\notag
\end{align}
We can follows analogous arguments to obtain
\begin{align}
(\bfF_2 y, \xi)_H &=-\lambda_2\langle AP_{\widetilde\clV_{M}}^{\clV_{M}^\perp} y, P_{\widetilde\clV_{M}}^{\clV_{M}^\perp} \xi\rangle_{V',V} \approx-\lambda_2[P_{\widetilde\clV_{M}}^{\clV_{M}^\perp} \xi]^\perp\bm S [P_{\widetilde\clV_{M}}^{\clV_{M}^\perp} y].\notag
\end{align}
Next, we use~$P_{\widetilde\clU_{M}}^{\clU_{M}^\perp} y\approx[\widetilde U_M] \bm Q_1 \bm [ U_M]^\top \bm M \bm \xi$ and~$P_{\widetilde\clV_{M}}^{\clV_{M}^\perp} y\approx[\widetilde V_M] \bm Q_2 \bm [ V_M]^\top \bm M \bm \xi$, with~$\bfQ_2\coloneqq[{\bm\Psi}^\top \bm M \widetilde{\bm\Psi}]^{-1}$ and find
\begin{align}
(\bfF_1 y, \xi)_H
&\approx-\lambda_1 \left([\widetilde U_M] \bm Q_1 \bm [ U_M]^\top \bm M \bm \xi\right)^\perp \bm S\bm M^{-1}\bm S [\widetilde U_M] \bm Q_1 \bm [ U_M]^\top \bm M \bm y\notag\\
&=-\lambda_1 \bm \xi^\perp\bm M\left([\widetilde U_M] \bm Q_1 \bm [ U_M]^\top  \right)^\perp \bm S\bm M^{-1}\bm S [\widetilde U_M] \bm Q_1 \bm [ U_M]^\top \bm M \bm y\notag\\
&=-\lambda_1 \bm \xi^\perp\bm M[ U_M]\bm Q_1^\perp\left([\widetilde U_M]^\perp \bm S\bm M^{-1}\bm S [\widetilde U_M]\right) \bm Q_1 \bm [ U_M]^\top \bm M \bm y;\notag\\
(\bfF_2 y, \xi)_H &\approx-\lambda_2\left([\widetilde V_M] \bm Q_2 \bm [ V_M]^\top \bm M \bm \xi\right)^\perp \bm S [\widetilde V_M] \bm Q_2 \bm [ V_M]^\top \bm M \bm y\notag\\
&=-\lambda_2 \bm \xi^\perp\bm M[ V_M]\bm Q_2^\perp\left([\widetilde V_M]^\perp \bm S [\widetilde V_M]\right) \bm Q_2 \bm [ V_M]^\top \bm M \bm y.\notag
\end{align}
That is, the discretizations of~$\bfF_1$ and~$\bfF_2$ are taken as
\begin{align}
&(\bfF_1)_D\coloneqq \bm M[ U_M]\widehat \bfF_1\quad\mbox{and}\quad(\bfF_2)_D\coloneqq \bm M[ V_M]\widehat \bfF_2,\\
\mbox{with}\qquad
&\widehat \bfF_1\coloneqq-\lambda_1\bm Q_1^\perp \bfR_1 \bm Q_1 \bm [ U_M]^\top \bm M\in\bbR^{M_\sigma\times N},\qquad\bfR_1 \coloneqq [\widetilde U_M]^\perp \bm S\bm M^{-1}\bm S [\widetilde U_M],\notag\\
&\widehat \bfF_2\coloneqq -\lambda_2\bm Q_2^\perp \bfR_2 \bm Q_2 \bm [ V_M]^\top \bm M\in\bbR^{M_\varsigma\times N},\qquad\bfR_2 \coloneqq [\widetilde V_M]^\perp \bm S [\widetilde V_M].
\end{align}

We precompute and store the feedback matrices~$\widehat \bfF_1$ and~$\widehat \bfF_2$ mapping the state into the coordinates of the inputs. We can We precompute and store the feedback matrices~$\widehat \bfF_1$ and~$\widehat \bfF_2$ mapping the state into the coordinates of the inputs. Note that we do not need to compute~$\bm M^{-1}$ to construct~$\bfR_1$, because we can perform the computations as~$\bfR_1= [\widetilde U_M]^\perp (\bm S(\bm M^{-1}(\bm S [\widetilde U_M])))$. Indeed, in this way, once we have computed~$\zeta\coloneqq\bm S [\widetilde U_M]\in\bbR^{N\times M_\sigma}$, then the (columns of the) matrix
$\bm M^{-1}\zeta\in\bbR^{N\times M_\sigma}$ can be computed iteratively; note that~$\bm M$ is symmetric and positive definite. Finally, recall that the number~$M_\sigma+M_\varsigma$ of actuators in~$U_M\cup V_M$ is independent of (and much smaller than) the number~$N$ of mesh points.

\subsection{Simulation Setup}

We consider a unit square domain~$\Omega = [0, 1]^2$ and a time domain~$[0,T]$ with~$T=1$, discretized with~$\Delta x = 1/150$ and~$\Delta t=5 \cdot 10^{-5}$. Setting~$\nu_2=0.005$ and~$\tau=2$ ensures that the phase-field's interface can be resolved, satisfying the condition~$0.0067 \approx \Delta x \ll  \sqrt{\nu_2/\nu_3}=0.05$, see \cite{deckelnick2005computation}. For the time-step size, we ensure the condition for energy stability in the fully implicit scheme, see \cite{barrett1999finite},~$\Delta t < 4\nu_2/\nu_3$, with~$\Delta t<0.01$. Smaller time steps are necessary for larger values of~$\lambda_1$ and~$\lambda_2$.

The parameters for all simulations are chosen as~$\nu_0=0.1$, $\nu_1=1$, $\nu_2=0.005$, and $\nu_3=2$. Additionally, for simplicity, we focus on the case~$g=0=N_g$; see system~\eqref{sys-z}.

The total volume covered by the used~$M_{\rm tot}\coloneqq M_\sigma+M_\varsigma=(2M)^2$ actuators is 6.25\% of the whole domain, independently of~$M$. We vary the parameters~$\lambda_1$ and~$\lambda_2$ in the simulation scenarios, setting, for simplicity,
\begin{equation}\notag
\lambda_1=\lambda_2\eqqcolon \lambda \in \{50,100,250,500,750,1000\}.
\end{equation}
The total number~$M_{\rm tot}$ of actuators varies between examples, with~$M\in\{1,2,3\}$ corresponding to~$M_{\rm tot}\in\{4,16,36\}$, as shown in Fig.~\ref{Fig:Actuators} (cf.~Fig.~\ref{fig.suppActSqu}).
\begin{figure}[H]
    \centering
\includegraphics[width=.99\textwidth,page=1]{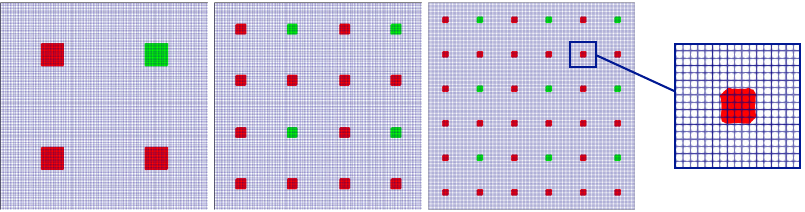}
       \caption{Depiction of actuators for~$M=1$ (left),~$M=2$ (middle) and~$M=3$ (right) on the spatial grid; the red actuators influence the~$z_{1\ttc}$ equation whereas the green actuators influence the~$z_{2\ttc }$ equation.}
    \label{Fig:Actuators}
\end{figure}

%%%%%%%%%%%%%%%%%%%%%%%
%%%%%%%%%%%%%%%%%%%%%%%
\subsection{Simulation results}
The target trajectory is taken as the solution corresponding to the initial state
\begin{equation}\label{ini-r}
z_{1\ttr ,0}(x)=\tanh(100(x_1-\tfrac12)), \qquad z_{2\ttr ,0}(x)=1.
\end{equation}

The time-snapshots in Fig.~\ref{Fig:Initial},
\begin{figure}[ht]
	\includegraphics[page=11,width=.90\textwidth]{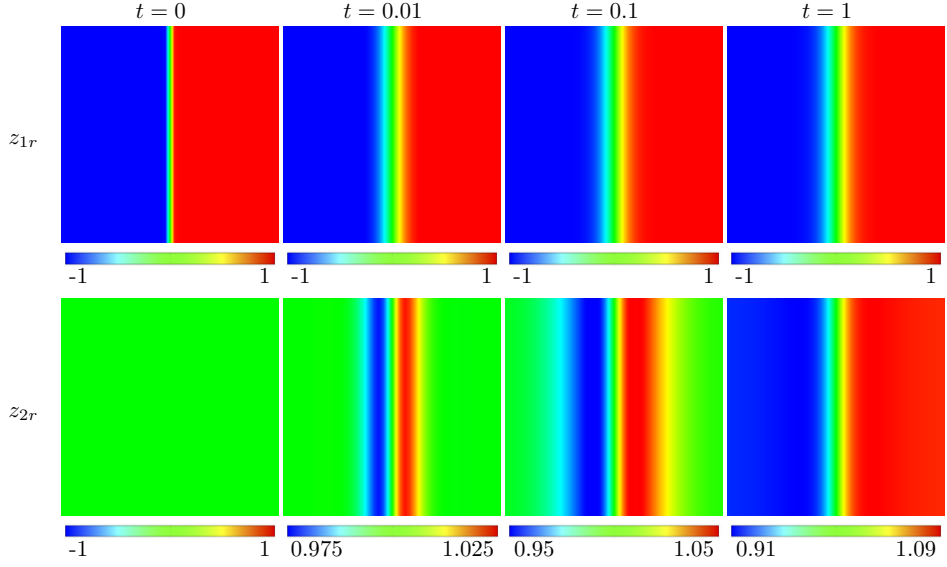}
	\caption{Evolution of the reference~$z_\ttr=(z_{1\ttr},z_{2\ttr})$ with initial state~\eqref{ini-r}.}
    \label{Fig:Initial}
\end{figure}
in the first row, show that the interface of $z_{1r}$ is initially growing and converges asymptotically to an equilibrium (the state changes after~$t=0.01$ are imperceptible, with the naked eye) with separated phases, as shown by the order-parameter/phase-field component~$y_{1\ttr }=z_{1\ttr }$ separated into values of~$-1$ and~$1$, with a smooth transition zone between. In addition, we observe that $z_{2r}$ evolves from a constant initial condition to form an interface at the same position as $z_{1r}$, driven by the coupling terms in the system. 
 Next, we consider another initial state as
 \begin{equation}\label{ini-c}
z_{1\ttc,0}(x)=0.3+\frac{\cos(2\pi x_1) \cos(2\pi x_2)}{100}, \qquad z_{2\ttc ,0}(x)=0.
\end{equation}
Firstly, we observe that to track the reference target trajectory~$y_{1\ttr }$ we will need a control forcing. Indeed, in Fig.~\ref{Fig:EquilibriumCos}
 \begin{figure}[ht]
 	    \centering \includegraphics[width=.90\textwidth,page=2]{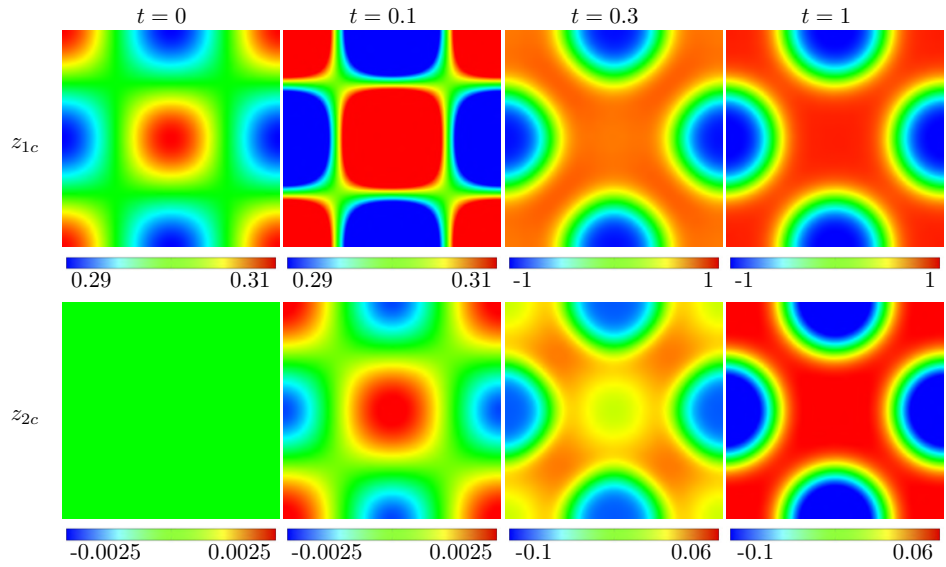}
    \caption{Case~$\lambda=0$.  Free dynamics  evolution of~$z_{\ttc}$, with initial state~\eqref{ini-c}.} \label{Fig:EquilibriumCos}
\end{figure}
we depict the free-dynamics solution (i.e.,  without control forcing,  corresponding to taking~$\lambda=0$), issued from the initial state~\eqref{ini-c}; we see that it
evolves to a different equilibrium with separated phases, again with a smooth transition zone in between. The dynamics of temperature~$z_{2\ttc }$ is significantly influenced by the phase-field variable~$z_{1\ttc}$ due to the coupling introduced by~$\nu_1$, with a similar structure noted in~$z_{2\ttc }$ but with smaller values  since the initial value was set to zero.

Due to Theorem~\ref{T:main}, we know that we have stabilizability for~$\lambda$ and~$M$ large enough.

%%%%%%%%%%%%%%%%%%%%%%%%
\subsubsection{Evolution of the solutions through time-snapshots}
We start by taking~$M=2$ and investigate the behavior for several values of~$\lambda$.
The convergence of~$z_{\ttc}$ to the target~$z_{\ttr}$ is checked
 in Fig.~\ref{Fig:EvolutionZ1Feedback} for $z_{1c}$ and Fig.~\ref{Fig:EvolutionZ2Feedback} for $z_{2\ttc}$,  for large values of~$\lambda\in\{500,1000\}$, the component~$z_{1\ttc}$ of the solution at time~$t=1$ is already close to the reference targeted one~$z_{1\ttr}$, in particular, with a distinct interface close to the vertical one as in Fig.~\ref{Fig:Initial}.
  \begin{figure}[ht]
 	    \centering \includegraphics[width=.95\textwidth,page=4]{Figures_ch.pdf}
    \caption{Case~$M=2$ with varying~$\lambda$. Evolution of~$z_{1\ttc}$ with feedback control.}
    \label{Fig:EvolutionZ1Feedback}
\end{figure}
\begin{figure}	
 	    \centering \includegraphics[width=.99\textwidth,page=12]{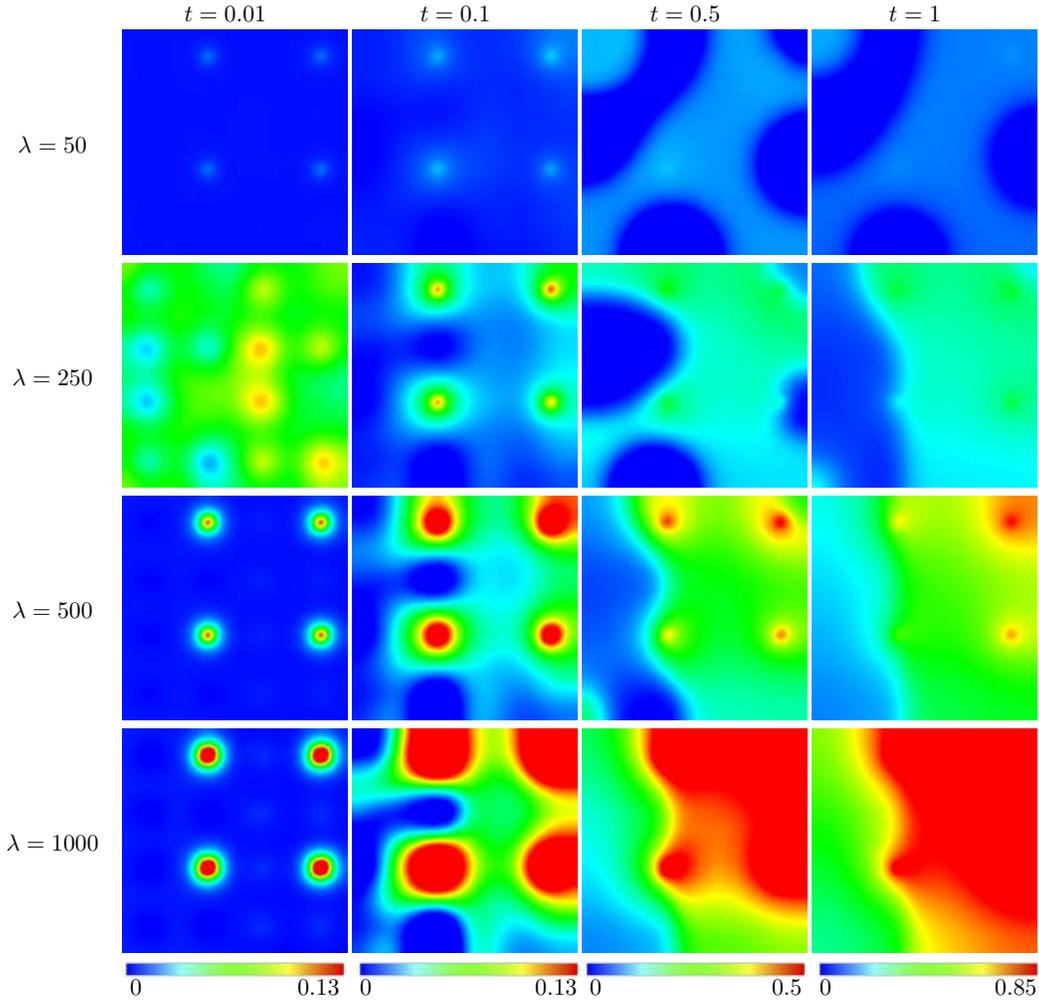}
    \caption{Case~$M=2$ with varying~$\lambda$. Evolution of~$z_{2\ttc}$ with feedback control.}
    \label{Fig:EvolutionZ2Feedback}
\end{figure}
In addition, in the time-snapshot at small time~$t=0.01$, we can observe a pronounced influence of the control forcing on the state at the placement of the~$16=M_{\rm tot}=(2M)^2$ actuators (cf. Fig.~\ref{Fig:Actuators}). We observe that the reference trajectory determines the value within the actuator, which then diffuses outward across the domain. A similar behavior is observable in Fig.~\ref{Fig:EvolutionZ2Feedback} for $z_{2c}$, where again the action of the control forcing reveals the location of the four actuators acting on the heat equation.

%%%%%%%%%%%%%%%%%%%%%%%%
\subsubsection{Evolution of the norm of the distance to the target}
We proceed by comparing logarithmic error plots between~$z_r$ and~$z_c$ to varying~$\lambda \in \{50,100,250,500,750,1000\}$ and~$M \in \{1,2,3\}$. These plots reflect the stability result that we have analyzed in Theorem~\ref{T:main}. We see in Fig.~\ref{Fig:VaryTog}
\begin{figure}[ht]
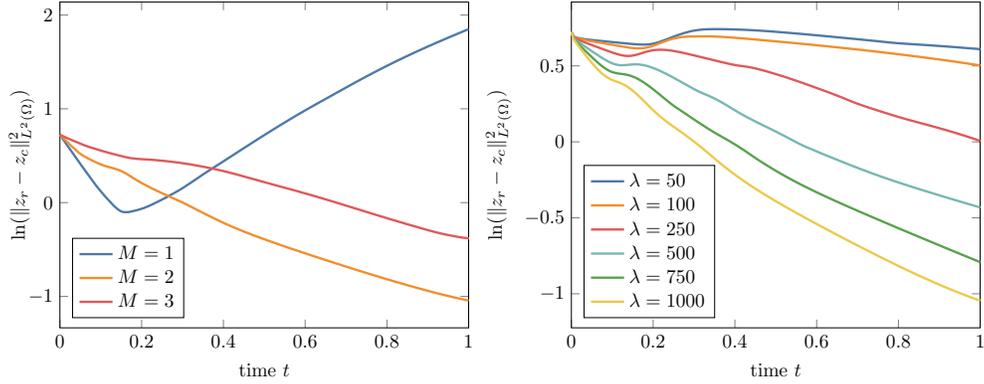

    \centering
    \includegraphics[height=.245\textheight,page=9]{Figures_ch.pdf}\includegraphics[height=.245\textheight,page=10]{Figures_ch.pdf}
    \caption{Norm of~$z_r-z_c$. For varying~$M$ with fixed~$\lambda=1000$ (left) and for varying~$\lambda$ with fixed~$M=2$ (right).}
    \label{Fig:VaryTog}
\end{figure}
 that, for smaller values of~$M$ or~$\lambda$, the norm of the state is not strictly decreasing as in Theorem \ref{T:main},  namely, for the case~$(M,\lambda)=(1,1000)$ and the cases~$\lambda\le500$ with~$M=2$. However, we observe this effect for the other cases, validating our theoretical result. In particular, we observe that, for fixed~$M$, the decay is faster for larger~$\lambda$. For fixed~$\lambda=1000$, both $M=2$ and $M=3$ decay at a similar rate.

We go into more detail and plot the norms of the coordinates~$z_{ir}-z_{ic}$, $i\in\{1,2\}$, of the difference~$z_{r}-z_{c}$, to highlight the effects on each equation. In particular, in Fig.~\ref{Fig:VaryActNum},
\begin{figure}[ht]
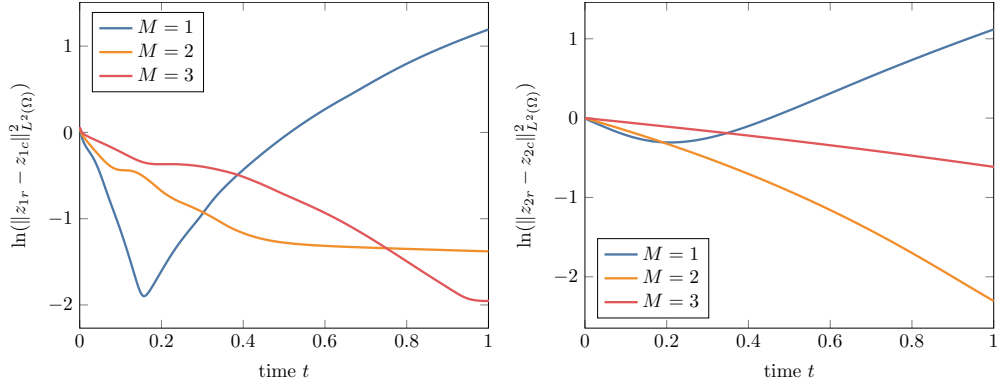

    \centering
\includegraphics[height=.245\textheight,page=5]{Figures_ch.pdf} \includegraphics[height=.245\textheight,page=6]{Figures_ch.pdf}
    \caption{Norm of the coordinates of~$z_r-z_c$ for  varying~$M$ and fixed~$\lambda=1000$.}
    \label{Fig:VaryActNum}
\end{figure}
 we observe a large initial drop for $M=1$ and fixed value~$\lambda=1000$ and then an increase in error afterward, which shows that the~$4$ actuators in this case are not enough for the controlled solution~$z_{\ttc}$ to track the targeted one~$z_{\ttr}$. By increasing the number of actuators as in the cases~$M\in\{2,3\}$ we see that~$z_{\ttc}$ is able to track~$z_{\ttr}$. the In Fig.~\ref{Fig:VaryLamVal}
\begin{figure}[ht]
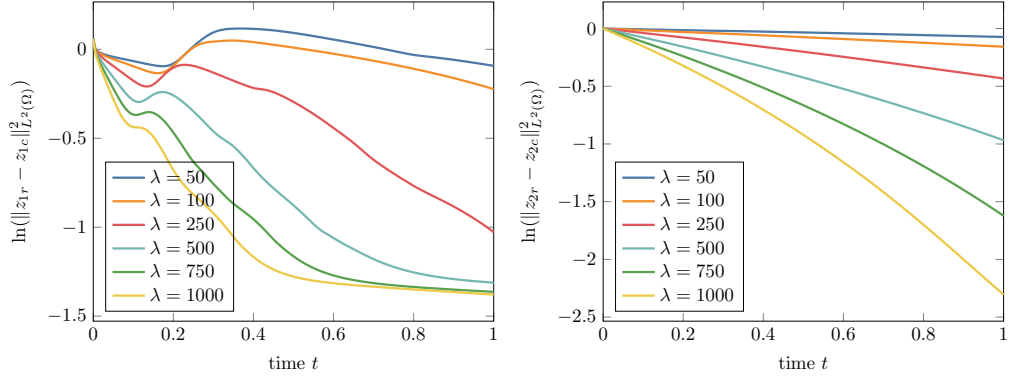

    \centering
\includegraphics[height=.24\textheight,page=7]{Figures_ch.pdf}
\includegraphics[height=.24\textheight,page=8]{Figures_ch.pdf}
    \caption{Norm of the coordinates of~$z_r-z_c$  for varying~$\lambda$ and fixed~$M=2$.}
    \label{Fig:VaryLamVal}
\end{figure}
 we fix $M=2$ and see that, in this case, also the decreasing rate of the heat component~$z_{2\ttr}-z_{2\ttc}$ clearly increases with~$\lambda$ agreeing with the results for the pair~$z_{\ttr}-z_{\ttc}$ in Fig.~\ref{Fig:VaryTog}.
The norm evolution of the component~$z_1=z_{1\ttr }-z_{1\ttc}$ exhibits more irregular behavior than~$z_2=z_{2\ttr }-z_{2\ttc}$, likely due to the more direct effects introduced by the nonlinear potential function~$f$; see~\eqref{polyFtau} and~\eqref{sys-z}.

%%%%%%%%%%%%%%%%%%%%%%%%
%%%%%%%%%%%%%%%%%%%%%%%%
%%%%%%%%%%%%%%%%%%%%%%%%
%\newpage
\appendix
\gdef\thesection{\Alph{section}}
\section*{Appendix}\normalsize
\setcounter{section}{1}%to restart with A (~1 in Alph)
\setcounter{theorem}{0} \setcounter{equation}{0}
\numberwithin{equation}{section}

%%%%%%%%%%%%%%%%%%%%%%%%
%%%%%%%%%%%%%%%%%%%%%%%%
\subsection{Proof of Lemma~\ref{L:poincare}}\label{Apx:proofL:poincare}
For simplicity, let~$\Omega=\bfR\subset\bbR^d$ be a rectangular spatial domain. An analogous argument can be followed for general convex polygonal domains~$\Omega=\bfP\subset\bbR^d$; see~\cite[Rem.~2.8]{AzmiKunRod23-ieee}.

The divergence~$\lim_{M\to+\infty}\alpha_{M}^H=+\infty$ is shown in~\cite[Sect.~5]{Rod21-sicon} where the support of the reference indicator function~$\omega_{21}^1$ is located at the center of a given rectangular spatial domain~$\Omega=\bfR\subset\bbR^d$. The steps of the proof can be repeated for an arbitrary location of~$\omega_{21}^1$, for example as in Fig.~\ref{fig.suppActSqu}.

The divergence~$\lim_{M\to+\infty}\alpha_{M}^V=+\infty$ follows from a slight variation of the arguments in~\cite[Sect.~4.2]{Rod21-jnls} for a rectangular spatial domain~$\Omega=\bfR\subset\bbR^d$. Note that in~\cite[Sect.~4.2]{Rod21-jnls}, the space~$\clU_1$ is taken larger with dimension~$d^2$ for simplicity, but this space can be taken with dimension~$d+1$, due to~\cite[Prop.~4.3]{Rod21-jnls}. Note that, denoting by~$c^j$ the center (of mass) of~$\omega_{1j}^1$,~$1\le j\le d+1$,  then for any given polynomial~$p(x)$ of degree~$1$ we find that
\begin{equation}\notag
\int_{\omega_{1j}^1}p(x)\rmd x=0\quad\Longleftrightarrow\quad\int_{\omega_{11}^1}p(x-\tau^j)\rmd x=0
\end{equation}
with~$\tau^j\coloneqq c^j-c^1$. Since~$p(x)=a_0+\sum_{i=1}^d a_ix_i$ is affine, we arrive at the analogue of~\cite[Eq.~(A.31), Sect.~A.7]{Rod21-jnls} as follows,
\begin{align}
&\int_{\omega_{11}^1}p(x-\tau^j)\rmd x=0\quad\mbox{for all}\quad j\in\{1,2,\dots, d+1\}\notag\\
\quad\Longleftrightarrow\quad&\int_{\omega_{11}^1}p(x)+p(\tau^j)-a_0\,\rmd x=0\quad\mbox{for all}\quad j\in\{1,2,\dots, d+1\}.\notag
\end{align}
Hence, by taking the centers~$c^j$ not all in the same hyperplane, it follows that the family~$\{\tau^j\mid 2\le j\le d+1\}$ is linearly independent. In this case, we can conclude that~$\int_{\omega_{1j}^1}p(x)\rmd x=0$ for all~$j\in\{2,\dots, d+1\}$ if, and only if, the linear mapping~$p(x)-a_0$ vanishes. Then, necessarily~$p(x)=a_0$, and~$\int_{\omega_{11}^1}p(x)\rmd x=0$ implies~$p=0$. That is, we can follow the arguments in~\cite[Sect.~A.7]{Rod21-jnls} to obtain the analogue of~\cite[Prop.~4.3]{Rod21-jnls}. This allows us to follow the remaining arguments in~\cite[Sect.~A.7]{Rod21-jnls} to show the that~$\lim_{M\to+\infty}\alpha_{M}^V=+\infty$ for a rectangular spatial domain~$\Omega=\bfR\subset\bbR^d$. 
\qed

%%%%%%%%%%%%%%%%%%%%%%%%
%%%%%%%%%%%%%%%%%%%%%%%%
\subsection{Proof of Lemma~\ref{L:M-and-lam}}\label{Apx:proofL:M-and-lam}
Inequality~\eqref{L:M-and-lamH} is shown in~\cite[Lem.~2.6]{KunRod23-dcds}, using the divergence of~$\alpha_M^H$ in Lemma~\ref{L:poincare}. Similar arguments can be followed to show inequality~\eqref{L:M-and-lamV}, using the divergence of~$\alpha_M^V$ in Lemma~\ref{L:poincare}.

\begin{remark}
Variations of the inequalities~\eqref{L:M-and-lamV} and~\eqref{L:M-and-lamH} in Lemma~\ref{L:M-and-lam} can be shown following the proofs in~\cite[Lem.~3.5]{KunRodWal21} and~\cite[Lem.~3.6]{Rod21-jnls}, which can be useful to prove stabilizing properties of appropriate variations of the feedback operators in~\eqref{OP-FeedK}.
\end{remark}

%\newpage
%\pagebreak
%%%%%%%%%%%%%%%%%%%%%%%%
%%%%%%%%%%%%%%%%%%%%%%%%
\bigskip\noindent
{\bf Aknowlegments.}
S.~S. Rodrigues acknowledges partial support from State of Upper Austria and Austrian Science
Fund (FWF): P 33432-NBL.

%%%%%%%%%%%%%%%%%%%%%%%%
%%%%%%%%%%%%%%%%%%%%%%%%
{\small%
 \bibliographystyle{plainurl}
  \bibliography{CH_OProj}
}

\end{document}